\definecolor{darkgreen}{rgb}{0,0.7,0}
\definecolor{darkblue}{rgb}{0,0,0.7}
\renewcommand{\div}{\text{div}}
\newcommand{\R}{\mathbb{R}}
\newcommand{\N}{\mathbb{N}}
\newcommand{\Sp}{\mathbb{S}_+(\R^d)}
\newcommand{\h}{\mathcal{H}}
\newcommand{\F}{\mathcal{F}}
\newcommand{\dhell}{D_{\mbox {\tiny{\rm Hell}}}}
\newcommand{\Las}{L_{A}^s}
\newcommand{\Span}{span}
\newcommand{\E}{\mathbb{E}}
\newcommand{\envelope}{(\raisebox{-.5pt}{\scalebox{1.45}{\Letter}}\kern-1.7pt)}
\newcommand{\veps}{\varepsilon}
\newcommand{\Lavg}{L^2_{\mbox {\tiny{\rm avg}}}(D)}
\newcommand{\Lpavg}{L^p_{\mbox {\tiny{\rm avg}}}(D)}
\newcommand{\Havg}{H^1_{\mbox {\tiny{\rm avg}}}(D)}
\newcommand{\Hsavg}{H^s_{\mbox {\tiny{\rm avg}}}(D)}
\newcommand{\Honeavg}{H^1_{\mbox {\tiny{\rm avg}}}}
\newcommand{\Hscalavg}{{\cal H}^s_{\mbox {\tiny{\rm avg}}}(D)}
\newcommand{\Hmscalavg}{{\cal H}^{-s}_{\mbox {\tiny{\rm avg}}}(D)}
            \newtheorem{thm}{Theorem}[section]
          \newtheorem{prop}[thm]{Proposition}
          \newtheorem{lem}[thm]{Lemma}
          \newtheorem{remark}[thm]{Remark}
		\newtheorem{assumption}[thm]{Assumption}
			\newtheorem{setting}[thm]{Setting}
\begin{document}
          \title{The Bayesian Formulation and Well-Posedness of Fractional Elliptic Inverse Problems}


          \author{{Nicol\'{a}s Garc\'{i}a Trillos }
         \and{Daniel Sanz-Alonso}}





         \pagestyle{myheadings} \markboth{The Bayesian Formulation and Well-Posedness of Fractional Elliptic Inverse Problems}{N. Garc\'{i}a Trillos, and D. Sanz-Alonso} \maketitle

          \begin{abstract}
        We study the inverse problem of recovering the order and the diffusion coefficient of an elliptic fractional partial differential equation from a finite number of noisy observations of the solution. We work in a Bayesian framework and show conditions under which the posterior distribution is given by a change of measure from the prior. Moreover, we show well-posedness of the inverse problem, in the sense that small perturbations of the observed solution lead to small Hellinger perturbations of the associated posterior measures. We thus provide a mathematical foundation to the Bayesian learning of the order ---and other inputs--- of fractional models.
          \end{abstract}
\begin{keywords}
Bayesian inverse problems, extension problem, fractional partial differential equations
\end{keywords}

   \section{Introduction}\label{intro}
   \subsection{Aim and Relevance}
         The great promise of nonlocal models described by fractional partial differential equations (FPDEs) has been largely confirmed in applications as varied as groundwater flow, finance, materials science, and mathematical biology. A common scenario is that  fractional order models are well established, and yet the correct order is hard to determine as it may depend on specific features of the problem. This is for instance the case in the modeling of viscoelastic materials \cite{bagley1983fractional}, where in practice the order and parameters of the models are determined by laboratory experiments that often contain non-negligible amounts of uncertainty and errors \cite{sasso2011application}. The aim of this paper is to provide a probabilistic framework for a fractional elliptic equation. This framework acknowledges any uncertainty in the order and the diffusion coefficient of the equation, and allows to reduce said uncertainty by the use of partial and noisy measurements of the output solution. In this way we extend the Bayesian formulation of elliptic inverse problems proposed in \cite{dashti2011uncertainty}  to fractional order models, and introduce ---to the best of our knowledge--- the first Bayesian approach to learning the order of a FPDE.

      	Other than their applied importance in modelling, there is a further motivation for the study of fractional equations, and specifically their inversion. Indeed, the solution to classical integer order PDEs  in physical space can some times be recovered from a FPDE in the lower-dimensional boundary of the physical domain. Output measurements are often taken in the boundary, and hence relate naturally to the fractional equation. With the Bayesian approach proposed in this paper, uncertainty  in the solution in the full physical domain ---after boundary measurements are taken--- could be characterized as follows: i) describe the uncertainty in the inputs of the FPDE; ii) reduce the uncertainty by use of boundary measurements; iii) propagate the remaining uncertainty in the inputs to characterize the uncertainty in the solution to the FPDE; and iv) use the mapping from boundary to physical domain to characterize the uncertainty in the full solution. An example is given by the full 3D quasi-geostrophic equations, whose streamlines can be recovered from the 2D surface quasi-geostrophic equations (which contains a fractional diffusive term) by solving an elliptic PDE in 3D. Data assimilation using boundary measurements for this system has been studied in a sequential context \cite{jolly2016data}. An example that is more closely related to the framework of this paper is given by the thin obstacle problem in the introduction of \cite{caffarelli2016fractional}.

         \subsection{Framework and Main Results}           
     In order to describe the inverse problem of interest, we now introduce a working definition of the FPDE that will constitute our forward model.  A more detailed mathematical account will be given in Section \ref{sec:forward}.   We work in a bounded Lipschitz domain $D\subset \R^d,$ and let $L_A := -\div (A(x) \nabla_x).$ For $0<s< 1$ we let $L_A^s$ denote a fractional power of the elliptic operator $L_A$ (see equation \eqref{def:fractional}, Section \ref{sec:forward}). We then consider the Neumann problem
          \begin{align} \label{fractionalpde} 
          \begin{split}
          \begin{cases}
          \Las p &= f, \quad \text{in} \,\,\, D, \\
          \partial_A p &= 0, \quad \text{on}  \,\,\,\partial D,
          \end{cases}
          \end{split}
          \end{align}
          where $\partial_A p:= A(x) \nabla p \cdot \nu,$ and $\nu$ is the exterior unit normal to $\partial D.$ Extensions to other boundary conditions are possible. The right-hand side $f$ is assumed to be known, and conditions on its regularity will be given.  The inputs $s$ and $A$ are assumed to contain non-negligible uncertainty. We suppose, however, that the diffusion coefficient $A= A(x)$ is known to be symmetric and strictly elliptic. The later means that there are positive constants $\lambda_A$ and $\Lambda_A$ such that, for almost every $x\in D,$ 
   \begin{equation}
    \lambda_A I_d \leq  A(x) \leq \Lambda_A I_d,  
   \label{UnifElliptic}
   \end{equation}
   where $I_d$ is the $d\times d$ identity matrix, and $M_1\ge M_2$ if $M_1-M_2$ is positive semi-definite.   The constants $\lambda_A$ and $\Lambda_A$ can be recovered sharply in terms of the minimum and maximum eigenvalues of the $A(x)$ over $D$. For such optimal $\lambda_A$ and $\Lambda_A$ we refer to $\Lambda_A/\lambda_A$ as the {\em ellipticity} of $A$. 
            Under mild assumptions equation \eqref{fractionalpde} has a unique solution $p=p_{s,A}$. The {\em forward map} is then defined as the map ${\cal F}: X\to Z$ from inputs $(s,A)\in X$  to the solution $p_{s,A}\in Z.$ We consider different choices of input parameter space $X$ and corresponding space of outputs $Z$, see Settings \ref{setting1} and \ref{setting2} below.  We investigate the inverse problem of learning the inputs $(s,A)\in X$  from a finite dimensional vector $y\in \R^m$ of partial and noisy measurements of the output solution $p\in Z.$ More precisely, we assume the additive Gaussian observation model         
                    \begin{equation}
          y = {\cal G}\bigl((s,A)\bigr) + \eta,
          \end{equation}
where ${\cal G}$ is the composition of the forward map ${\cal F}:X\to Z$ with a bounded linear functional ${\cal O}:Z \to \R^m$ representing an {\em observation map}, and $\eta$ is a vector of measurement errors that we assume to be centered and Gaussian with known positive definite covariance $\Gamma$,  $\eta \sim N(0,\Gamma).$                
               
              We follow the Bayesian approach to inverse problems \cite{kaipio2006statistical}, \cite{AS10} and put a prior distribution $\mu_0$ on the inputs $(s,A)\in X,$ aiming to capture both the uncertainty and the available knowledge about the inputs. The prior is then conditioned on the observed data $y$ to produce ---via Bayes' rule--- a {\em posterior} distribution $\mu^y$ on the space $X$ of input parameters. Note, however, that application of Bayes' rule in this setting requires careful justification since our space of parameters is  infinite dimensional. We will provide such justification here in two different settings. That is the content of our first main result:  
              
              \begin{thm}\label{thm:maintheorem1}
Under the conditions of Setting \ref{setting1} or Setting \ref{setting2} below, the forward map ${\cal F}: X\to Z$ is continuous.
Therefore, if the prior $\mu_0$ is any measure with $\mu_0(X) = 1,$ then the Bayesian inverse problem of recovering inputs  $u:=(s,A) \in X$ of the FPDE \eqref{fractionalpde} from data 
$$y = {\cal G}(u) + \eta, \quad \eta \sim N(0,\Gamma),$$
is well formulated: the posterior $\mu^y$ is well defined in $X$ and it is absolutely continuous with respect to $\mu_0.$ Moreover, the Radon-Nikodym derivative is given by         \begin{equation}
\frac{d\mu^y}{d\mu_0}(u) = \frac{1}{Z}\exp \Bigl(- \frac{1}{2}|y-{\cal G}(u)|_{\Gamma}^2, \Bigr), \quad Z= \int_X \exp \Bigl(- \frac{1}{2}|y-{\cal G}(u)|_{\Gamma}^2 \Bigr)\, d\mu_0(u),
\end{equation}
where $|\cdot|_\Gamma = |\Gamma^{-1/2}\cdot|,$ and $|\cdot|$ is the Euclidean norm in $\R^m.$
\end{thm}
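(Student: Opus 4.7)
The plan is to separate the theorem into two logically distinct parts: (i) continuity of the forward map ${\cal F}:X\to Z$ in each of the two settings, and (ii) the Bayesian consequence, namely existence of the posterior, absolute continuity with respect to $\mu_0$, and the explicit Radon-Nikodym derivative. Part (ii) is a direct application of the now-standard infinite-dimensional Bayes' theorem from the framework developed in \cite{AS10}, so the entire substance of the argument lives in part (i).

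For the continuity of ${\cal F}$, the natural route is to work through the extension problem (Caffarelli--Silvestre) that the paper already signals in its keywords, which replaces the nonlocal operator $L_A^s$ by a local degenerate elliptic problem on a cylinder $D\times (0,\infty)$ with weight $y^{1-2s}$. Given sequences $(s_n,A_n)\to(s,A)$ in $X$, I would write the weak formulation of each extended problem in the natural weighted $H^1$ space and let $P_n$ denote the extension associated to $(s_n,A_n)$ and $P$ the one associated to $(s,A)$. Uniform ellipticity of each $A_n$ (inherited from the assumption in Setting 1 or Setting 2) together with uniform bounds on $s_n$ bounded away from $0$ and $1$ provides a uniform coercivity estimate in the weighted space, from which one obtains a uniform a priori bound on $\|P_n\|$ in the appropriate weighted Sobolev norm. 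Weak compactness then yields a limit $\tilde P$ along a subsequence, and identifying $\tilde P = P$ reduces to passing to the limit in the bilinear form; the only nontrivial input is that the $s_n$-dependent weight $y^{1-2s_n}$ converges to $y^{1-2s}$ in an appropriate sense, which can be handled by dominated convergence on bounded truncations combined with the uniform energy bound. A uniqueness argument for the limit then upgrades weak-subsequential convergence to convergence of the full sequence, and the trace $p_n = P_n|_{D\times\{0\}}$ converges in the output topology of $Z$ dictated by the setting.

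Once continuity of ${\cal F}$ is in hand, continuity of ${\cal G}={\cal O}\circ{\cal F}:X\to\R^m$ follows because ${\cal O}$ is bounded linear. The map $u\mapsto \Phi(u;y):=\tfrac12|y-{\cal G}(u)|_\Gamma^2$ is therefore continuous, nonnegative, and measurable with respect to the Borel $\sigma$-algebra on $X$, so $\exp(-\Phi(u;y))$ is bounded above by $1$ and below by $0$. Since $\mu_0(X)=1$, the normalizing constant
\begin{equation*}
Z = \int_X \exp\bigl(-\tfrac12|y-{\cal G}(u)|_\Gamma^2\bigr)\,d\mu_0(u)
\end{equation*}
is finite, and it is strictly positive because the integrand is continuous and bounded below by $\exp(-\tfrac12|y-{\cal G}(u)|_\Gamma^2)>0$ for every $u\in X$. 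The standard construction (e.g., Theorem 6.31 in \cite{AS10}) then yields that $\mu^y$ defined by the claimed Radon-Nikodym derivative is a well-defined probability measure on $X$ that coincides with the regular conditional distribution of $u$ given $y$ under the joint law induced by $u\sim\mu_0$ and $y\mid u\sim N({\cal G}(u),\Gamma)$.

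The main obstacle will be the joint continuity in $(s,A)$, and more specifically the continuity in the fractional order $s$, since the operator $L_A^s$ itself depends nonlinearly on $s$ and the natural function spaces (fractional Sobolev spaces or the weighted extension spaces) vary with $s$. Handling this is exactly where the extension formulation pays off: it recasts the $s$-dependence as a change in a weight and a boundary normalization constant, both of which behave continuously in $s$ on compact subintervals of $(0,1)$, so no genuinely new functional-analytic machinery is needed beyond careful bookkeeping of the weighted estimates and the Dirichlet-to-Neumann normalization.
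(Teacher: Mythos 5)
Your overall decomposition---continuity of ${\cal F}$ as the substantive step, then the standard infinite-dimensional Bayes' theorem from \cite{AS10} for the posterior---matches the paper, and the treatment of the normalizing constant $Z$ is fine (the paper simply invokes Proposition~\ref{wellposed}).

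The continuity argument, however, diverges from the paper's and leaves two gaps. The paper's extension proof is a \emph{direct quantitative} estimate: fixing $s$, it tests the two weak formulations against $P_{s,A}-P_{s,A'}$ to get a Lipschitz bound on $\|p_{s,A}-p_{s,A'}\|_{H^s}$ in $\|A-A'\|_\infty$, and then handles the $s$-variable separately via the spectral representation $p_{s,A}=\sum_k\lambda_k^{-s}f_k\psi_k$. You instead propose a single compactness argument in the extension framework with $s$ varying. That is a genuinely different route, and as stated it is not safe: the weighted spaces $H^1(D\times(0,\infty),y^{1-2s_n}\,dy\,dx)$ are different for each $n$, so a uniform energy bound $\int|\nabla P_n|^2\,y^{1-2s_n}\,dy\,dx\le C$ is not a bound in a \emph{fixed} Hilbert space and weak compactness cannot be invoked directly. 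One would first need to exhibit a fixed space containing all $P_n$ with comparable norms (comparing weights away from $y=0$ and controlling the near-boundary region separately), which is exactly the bookkeeping your sketch gestures at but does not provide.

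The more serious gap is Setting~\ref{setting2}, where $Z=C^0(D)$ with the sup norm. The trace of the weighted extension space is only $H^s$, so even if your compactness argument were completed it would yield $L^2$ (or $H^s$) convergence of the traces $p_n$, not uniform convergence. The paper closes this gap with two ingredients you omit entirely: the H\"older estimate of \cite{caffarelli2016fractional},
\begin{equation*}
[p_{s,A}]_{C^{0,\alpha}(D)}\le C\bigl(\|p_{s,A}\|_{L^2}+[p_{s,A}]_{H^s}+\|f\|_{L^p}\bigr),
\end{equation*}
valid under the $A\in C^1$, $f\in L^p$ hypotheses of Setting~\ref{setting2}, combined with the interpolation
\begin{equation*}
\|\phi_1-\phi_2\|_\infty\le C\,\max\bigl\{[\phi_1]_{C^{0,\alpha}},[\phi_2]_{C^{0,\alpha}}\bigr\}^{d/(2\alpha+d)}\|\phi_1-\phi_2\|_{L^2}^{2\alpha/(2\alpha+d)}.
\end{equation*}
Without a uniform H\"older bound on the $p_n$ and an interpolation step of this type, the passage from $L^2$ convergence to sup-norm continuity of ${\cal F}:X\to C^0(D)$ is unjustified; so as written the proposal covers (modulo the first caveat) Setting~\ref{setting1} but not Setting~\ref{setting2}.
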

The proof is given in Section \ref{sec:formulation}. We remark that measurability of ${\cal G}$ would suffice for the above result to hold. Measurability of ${\cal G}$ is implied by the shown continuity of ${\cal F}$ and the assumed continuity of ${\cal O}.$

Our second main result concerns well-posedness of the Bayesian inverse problem, in the sense that small perturbations in the data lead to small Hellinger perturbations of the corresponding posterior measures. We recall that the Hellinger distance between two probability measures $\mu, \mu'$ (defined in the same measurable space) is given by
  \begin{equation}
   \dhell(\mu,\mu')^2 = \frac12 \int\biggl(\sqrt{\frac{d\mu}{d\nu}} - \sqrt{\frac{d\mu'}{d\nu}} \,\,\,\biggr)^2 \, d\nu,
  \end{equation}
  where $\nu$ is {\em any} reference probability measure with respect to which both $\mu$ and $\mu'$ are absolutely continuous (e.g. $\frac12 \mu+ \frac12 \mu'$).
  We then have:

\begin{thm}\label{thm:maintheorem2}
Suppose Setting \ref{setting1} or Setting \ref{setting2} below are in place, and that the prior $\mu_0$ satisfies Assumption \ref{assumptionprior}. Then there is $C=C(r)$ such that, for all $y_1, y_2 \in \R^m$ with $|y_1|, |y_2|\le r,$
          \begin{equation}\label{stabilityhellinger}
     \dhell(\mu^{y_1}, \mu^{y_2}) \le C |y_1 - y_2|.
          \end{equation}
\end{thm}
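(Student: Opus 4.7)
The plan is to follow the standard Stuart--Dashti strategy for Hellinger well-posedness, applied to the posterior formula from Theorem \ref{thm:maintheorem1}. Writing the negative log-likelihood as $\Phi(u;y) := \tfrac{1}{2}|y - {\cal G}(u)|_\Gamma^2$ and $Z(y) := \int_X e^{-\Phi(u;y)}\, d\mu_0(u)$, Theorem \ref{thm:maintheorem1} gives $d\mu^y/d\mu_0 = Z(y)^{-1} \exp(-\Phi(\cdot;y))$. Taking $\mu_0$ as the reference probability measure in the definition of Hellinger distance,
\begin{equation*}
2\, \dhell(\mu^{y_1}, \mu^{y_2})^2 = \int_X \left(\frac{e^{-\Phi(u;y_1)/2}}{\sqrt{Z(y_1)}} - \frac{e^{-\Phi(u;y_2)/2}}{\sqrt{Z(y_2)}}\right)^2 d\mu_0(u).
\end{equation*}
Adding and subtracting $e^{-\Phi(u;y_2)/2}/\sqrt{Z(y_1)}$ and using $(a+b)^2 \le 2a^2 + 2b^2$ splits the right-hand side into a ``misfit'' term $I_1$ controlled pointwise by $|e^{-\Phi(u;y_1)/2} - e^{-\Phi(u;y_2)/2}|$ and a ``normalization'' term $I_2$ proportional to $|Z(y_1)^{-1/2} - Z(y_2)^{-1/2}|^2$.

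For $I_1$, I would establish two pointwise estimates. First, expanding the quadratic yields
\begin{equation*}
\Phi(u;y_1) - \Phi(u;y_2) = \tfrac{1}{2}(|y_1|_\Gamma^2 - |y_2|_\Gamma^2) - \langle y_1 - y_2, {\cal G}(u)\rangle_\Gamma,
\end{equation*}
so on $\{|y_i| \le r\}$ one has $|\Phi(u;y_1) - \Phi(u;y_2)| \le C(r)(1 + |{\cal G}(u)|_\Gamma)\, |y_1 - y_2|$. Second, since $\Phi \ge 0$, the map $x \mapsto e^{-x/2}$ is $\tfrac{1}{2}$-Lipschitz on $[0,\infty)$, so $|e^{-\Phi(u;y_1)/2} - e^{-\Phi(u;y_2)/2}| \le \tfrac{1}{2}|\Phi(u;y_1) - \Phi(u;y_2)|$. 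These combine to an $L^2(\mu_0)$ bound of order $|y_1 - y_2|^2 \int_X (1 + |{\cal G}(u)|_\Gamma)^2\, d\mu_0(u)$, which is finite by the integrability condition I expect Assumption \ref{assumptionprior} to enforce.

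For $I_2$ I first need a uniform lower bound $Z(y) \ge Z_{\min}(r) > 0$ for $|y|\le r$: since $\Phi(u;y) \le C_r(1 + |{\cal G}(u)|_\Gamma^2)$, one has $Z(y) \ge e^{-C_r} \int_X \exp(-C_r |{\cal G}(u)|_\Gamma^2)\, d\mu_0(u) > 0$, the positivity following from $\mu_0(X)=1$ and the integrand being strictly positive. Combined with the pointwise bound above,
\begin{equation*}
|Z(y_1) - Z(y_2)| \le C(r)\, |y_1 - y_2| \int_X (1 + |{\cal G}(u)|_\Gamma)\, d\mu_0(u),
\end{equation*}
so a first-order expansion of $z \mapsto z^{-1/2}$ at $Z_{\min}(r)$ produces $|Z(y_1)^{-1/2} - Z(y_2)^{-1/2}| \le C(r)\, |y_1 - y_2|$. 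Putting the bounds on $I_1$ and $I_2$ together yields \eqref{stabilityhellinger}.

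The main obstacle is the integrability of $|{\cal G}(u)|_\Gamma$ against $\mu_0$: both the $L^2$ moment in the misfit bound and the positivity of the exponential moment in the lower bound on $Z$ must follow from Assumption \ref{assumptionprior}. I expect that assumption to combine a tail condition on the prior $\mu_0$ with an a priori energy estimate for the FPDE \eqref{fractionalpde} of the form $\|p_{s,A}\|_Z \le C(s, \lambda_A, \Lambda_A, f)$ (so that ${\cal G}(u)$ is controlled by a $\mu_0$-integrable function of $u$); the continuity of ${\cal F}$ from Theorem \ref{thm:maintheorem1} is what makes such a pointwise bound measurable and thus integrable. All subsequent steps are routine given that integrability.
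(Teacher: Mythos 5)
Your proposal reconstructs, correctly, the standard Stuart--Dashti Hellinger-stability argument: decompose the squared distance into a misfit term $I_1$ and a normalization term $I_2$, control $\Phi(u;y_1)-\Phi(u;y_2)$ pointwise by $(1+|\mathcal G(u)|_\Gamma)|y_1-y_2|$, use the global Lipschitz property of $e^{-x/2}$, and bound $Z(y)$ below uniformly for $|y|\le r$. This is precisely the content of Proposition \ref{propositionhell}, which the paper cites from \cite{AS10}, \cite{DS15} and then simply invokes. So the algebra is fine, but you are re-proving the black box rather than proving the theorem.

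The actual content of the paper's proof is the step you wave at in your last paragraph: verifying that $\mathcal G\in L^2_{\mu_0}(X)$ under Assumption \ref{assumptionprior}. Since $\mathcal O$ is bounded linear, this reduces to $\mathcal F\in L^2_{\mu_0}(X)$, and here the two settings diverge sharply. In Setting \ref{setting1} the estimate is the elementary energy bound
\begin{equation*}
\|\mathcal F(s,A)\|_{L^2}\le \lambda_1^{-s}\|f\|_{L^2}\le \max\Bigl\{1,\tfrac{C_D}{\lambda_A}\Bigr\}\|f\|_{L^2},
\end{equation*}
and Assumption \ref{assumptionprior} (really only the square integrability of $1/\lambda_A$) gives the conclusion. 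In Setting \ref{setting2}, however, $\mathcal F$ maps into $C^0(D)$ and the relevant bound is the H\"older estimate \eqref{RegEstimates} from Caffarelli--Stinga. The constant there depends on the ellipticity and modulus of continuity of $A$ in a way that \cite{caffarelli2016fractional} leaves implicit; because the prior $\mu_{0,2}$ is allowed to put mass on $A$ with arbitrarily large ellipticity (e.g.\ log-Gaussian priors), one must actually show that this constant grows at most polynomially in $\Lambda_A/\lambda_A$ and $\|A\|_{C^1}$. The paper does this by reworking the Caccioppoli inequality (Lemma 3.2 in \cite{caffarelli2016fractional}) to get an explicit factor $(\Lambda/\lambda)^2$, and by rescaling the normalization in the compactness-based approximation lemma (Corollary 3.3 there) so that the small parameter $\delta$ carries an explicit $\lambda/\Lambda$ factor. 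None of this appears in your proposal, and it cannot be dismissed as ``routine given the integrability'' --- it \emph{is} the integrability statement. Without it, Theorem \ref{thm:maintheorem2} in Setting \ref{setting2} is not proved.

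A smaller remark: your lower bound on $Z(y)$ is stated as $Z(y)\ge e^{-C_r}\int_X \exp(-C_r|\mathcal G(u)|_\Gamma^2)\,d\mu_0>0$. The strict positivity is indeed immediate from $\mu_0(X)=1$ and the pointwise positivity of the integrand, so this part is fine; you do not actually need an exponential moment of $|\mathcal G|_\Gamma^2$ against $\mu_0$, only that $\mathcal G$ is $\mu_0$-a.e.\ finite, which follows from $\mathcal G\in L^2_{\mu_0}$.
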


The proof can be found in Section \ref{sec:hellinger}.  Stability of the Hellinger distance guarantees stability of posterior expectations under perturbations in the data. Precisely, \eqref{stabilityhellinger} implies that for $h\in L^2_{\mu^y}(X)\cap L^2_{\mu^{y'}}(X)$  there is $C,$ depending only on $r$ and on the expectation of $h^2$ under $\mu^y$ and $\mu^{y'},$ such that 
 $$\Bigl|\E^{\mu^y} h - \E^{\mu^{y'}} h\Bigr| \le C |y-y'|.$$
 
\subsection{Scope and Novelty} 
 
Existence and well-posedness results similar to Theorems \ref{thm:maintheorem1} and \ref{thm:maintheorem2} have been established for a number of Bayesian inverse problems, see e.g. \cite{AS10}, \cite{dashti2011uncertainty}. However, the proofs of our main results require new techniques that rely on state-of-the art (F)PDE regularity theory. We now briefly review some of the novel features in the scope and analysis of the Bayesian inverse problem studied in this paper: 

\begin{enumerate}
\item Bayesian learning of the order of the model ---and potentially of spatially-variable order models--- is bound to find applications in finance, material science, the geophysical sciences, and beyond. Our results build on the recently developed theory of Bayesian inverse problems in function space \cite{AS10}. We provide a brief review in Section \ref{sec:bip}.
\item We show (Subsection \ref{ssec:spectralapproach}) continuity of the forward map using spectral theory of self-adjoint, compact operators. This proof relies on the spectral definition of the fractional operator $\Las$, described in Subsection \ref{ssec:spectralproperties}, and allows, at most, for right-hand side $f\in L^2(D).$
\item We make use of the extension problem for elliptic FPDEs, reviewed in Subsection \ref{ssec:extensionproblem}. This powerful idea allows to study elliptic FPDEs by means of an associated elliptic PDE in higher dimensional space with degenerate diffusion coefficient. The extension problem for the fractional Laplacian was introduced in \cite{caffarelli2007extension}, and then generalized in \cite{stinga2010fractional}, \cite{stinga2010extension} to more general second order elliptic operators, such as the ones considered in this paper. We provide a second proof of continuity of the forward map using the extension problem in Subsection \ref{ssec:extensionapproach}. This approach allows for source $f\in H^{-s}$, at least for certain priors. 
\item The proof of Theorem \ref{thm:maintheorem1}, and specially that of Theorem \ref{thm:maintheorem2}, requires careful analysis of how different constants appearing in regularity estimates depend on the ellipticity of the base elliptic operator $L_A$. In particular, and as part of our analysis, we study the effect of the ellipticity on different fractional Sobolev norms (Remark \ref{remark:boundnorms}) and on the Cacciopoli estimates in \cite{caffarelli2016fractional}. The later is necessary in order for the theory to cover log-normal type priors ---widely used in applications--- for the diffusion coefficient. 
\end{enumerate}

\subsection{Literature}

	We conclude this introduction by relating our work to the literature.  An extensive review of fractional dynamics, their applications, and their connection to stochastic processes is \cite{klafter2005anomalous}. The interplay between fractional diffusion and stochastic processes sheds light into their key applied relevance: the Feynman-Kac formula for general $\alpha$-stable L\'evy processes \cite{bertoin1998levy}, \cite{applebaum2009levy}  ---widely used, for instance, in finance--- is a fractional Laplacian diffusion \cite{bochner1949diffusion}  (with integer order for Brownian motion). Fractional derivatives have been used to model groundwater flow \cite{atangana2013use}, and a deep analysis of the fractional porous medium equation is given in \cite{de2011fractional}. The regularity theory for the Neumann problem \eqref{fractionalpde} ---as well as the Dirichlet problem--- has been thoroughly studied in \cite{caffarelli2016fractional}. Two important tools in the analysis of elliptic FPDEs are the extension problem, by which the analysis can be reduced to that of an elliptic PDE with degenerate diffusion coefficient \cite{caffarelli2007extension}, \cite{stinga2010extension} \cite{stinga2010fractional}, and the use of characterizations of the fractional Laplacian based on the heat semigroup or Poisson kernels \cite{caffarelli2016fractional}.  On the computational side, finite element methods for fractional problems have also been studied via the extension argument \cite{nochetto2015pde} ---see also \cite{acosta2015fractional}. The inverse problem considered here could be amenable to classical regularization approaches \cite{engl1996regularization}, and there has been recent interest in inverse problems for related fractional models \cite{jin2015tutorial}, \cite{zhang2015undetermined}.  The Bayesian formulation that we adopt has, however,  two main appealing features, see e.g. \cite{AS10}. First, the prior provides a natural way (with a clear probabilistic interpretation) of regularizing the otherwise underdetermined inverse problem. Second, the solution to the Bayesian inverse problem, i.e. the posterior measure, contains information on the remaining uncertainty in the inputs after the observations have been assimilated. A precise understanding of the uncertainty in the inputs is key in order to characterize the uncertainty in the solution to  \eqref{fractionalpde}. In this regard, the numerical propagation of uncertainty through differential models is an active area of research \cite{xiu2010numerical}, \cite{xiu2002wiener}.   A textbook on  the Bayesian approach to inverse problems is \cite{kaipio2006statistical}. The formulation was extended to function space settings ---such as the one considered in this paper--- in \cite{AS10}. The infinite dimensional elliptic inverse problem was studied in \cite{dashti2011uncertainty}, and posterior consistency was established in \cite{SV13}. The derivation of posterior consistency results in the fractional setting will be the subject of future work, as will be the investigation  of spatially-varying order models \cite{zayernouri2015fractional} and priors \cite{calvetti2015variable}. We also intend to study the specific computational challenges of the Bayesian inverse problem  arising from the fractional forward model.
          
         \paragraph{Outline}  Section \ref{sec:bip} reviews the Bayesian approach to inverse problems in function space. 
     Section \ref{sec:forward} introduces the mathematical formulation of the  forward model \eqref{fractionalpde}.      In Section \ref{sec:formulation} we show continuity of the forward map, thereby proving Theorem \ref{thm:maintheorem1}. Section \ref{sec:hellinger} contains the proof of Theorem \ref{thm:maintheorem2}. A simple example is given in Section \ref{sec:example}. We close in Section \ref{sec:conclusion}. The proofs of some auxiliary results are brought together in an appendix.
          \paragraph{Notation}      We let  $\Sp$ be the space of $d\times d$ real positive definite matrices. Function spaces of zero-mean functions will be denoted with a subscript ``{\text avg}". For instance, $\Lavg$ will denote the space of functions $h\in L^2(D)$ with $\int_D h(x)\, dx = 0.$
          
                    \section{Bayesian Inverse Problems}\label{sec:bip}
          Let $X$  and $Z$ be two separable Banach spaces. $X$ will represent the space of input parameters for \eqref{fractionalpde}, and $Z$ will represent the space of corresponding output solutions. Let ${\cal F}:X\to Z$ be the {\em forward map} from inputs to outputs, and let ${\cal O}:Z \to \R^m$ be the {\em observation map} from outputs to data. Suppose that $\cal F$ and $\cal O$ are Borel measurable maps and denote ${\cal G}:= {\cal O} \circ {\cal F}.$ We consider the inverse problem 
          \begin{equation}\label{inverse problem}
          y = {\cal G}(u) + \eta,
          \end{equation}
          where the aim is to recover the input $u$ from data $y.$ We assume that $\eta \sim N(0,\Gamma)$ for given positive definite $\Gamma.$  We follow the Bayesian approach and put a prior $\mu_0$ on the unknown $u$. The conditional law of $u$ given $y$ is known as the posterior measure, and will be denoted $\mu^y.$ The following two propositions are an immediate consequence of the theory of inverse problems in function space introduced in \cite{AS10}, and further developed in \cite{DS15}. We will use them to show our main results Theorem \ref{thm:maintheorem1} and \ref{thm:maintheorem2}.

          \begin{prop}[Posterior definition]\label{wellposed}
          Suppose that the map ${\cal G}: X\to \R^m$ is measurable and that $\mu_0(X) = 1.$ Then the posterior distribution $\mu^y$ is absolutely continuous with respect to $\mu_0,$ and the Radon-Nikodym derivative is given by
          \begin{equation}\label{radonnikodym}
          \frac{d\mu^y}{d\mu_0}(u) = \frac{1}{Z}\exp \Bigl(-\frac{1}{2}|y-{\cal G}(u)|_{\Gamma}^2, \Bigr), \quad Z= \int_X \exp \Bigl(-\frac{1}{2}|y-{\cal G}(u)|_{\Gamma}^2, \Bigr)\, d\mu_0(u).
          \end{equation}
          \end{prop}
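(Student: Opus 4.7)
The plan is to construct the joint law of $(u,y)$ on the product space $X\times\R^m$ and then identify $\mu^y$ as the regular conditional distribution of $u$ given $y$. Since $u\sim\mu_0$ and $\eta\sim N(0,\Gamma)$ are independent and $y={\cal G}(u)+\eta$, the measurability of ${\cal G}:X\to\R^m$ ---which is precisely the hypothesis--- ensures that $(u,y)$ is a well-defined random element with values in the Polish space $X\times\R^m$. Its joint distribution $\nu$ is absolutely continuous with respect to $\mu_0\otimes\text{Leb}_{\R^m}$, with explicit density
\begin{equation*}
\frac{d\nu}{d(\mu_0\otimes\text{Leb})}(u,y)=\frac{1}{(2\pi)^{m/2}\det(\Gamma)^{1/2}}\exp\!\Bigl(-\tfrac12|y-{\cal G}(u)|_\Gamma^2\Bigr),
\end{equation*}
obtained by writing the Gaussian density of $\eta=y-{\cal G}(u)$.

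Next I would integrate out $u$ to obtain the marginal of $y$. The normalizing factor in the candidate formula for the posterior is
\begin{equation*}
Z(y)=\int_X \exp\!\Bigl(-\tfrac12|y-{\cal G}(u)|_\Gamma^2\Bigr)\,d\mu_0(u).
\end{equation*}
Because the integrand is bounded above by $1$ and $\mu_0$ is a probability measure, $Z(y)\le 1$. It is also strictly positive since the integrand is everywhere positive on $X$ and $\mu_0(X)=1$. Hence the marginal density of $y$ is $(2\pi)^{-m/2}\det(\Gamma)^{-1/2}Z(y)$, which is finite and strictly positive for every $y\in\R^m$.

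Having the joint density and a strictly positive marginal, I would now invoke the disintegration/conditioning theorem for Polish-space valued random variables (or equivalently verify directly the defining identity $\E[\mathbf{1}_A(u)\,g(y)]=\int g(y)\,\mu^y(A)\,\rho(y)\,dy$ for $A\in{\cal B}(X)$ and bounded measurable $g$). The quotient of the joint density by the marginal yields, for each $y\in\R^m$, the probability measure
\begin{equation*}
\mu^y(du)=\frac{1}{Z(y)}\exp\!\Bigl(-\tfrac12|y-{\cal G}(u)|_\Gamma^2\Bigr)\,\mu_0(du),
\end{equation*}
which is the asserted Radon--Nikodym derivative and, by construction, a version of the regular conditional distribution of $u$ given $y$.

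The only subtle point is that this argument requires the existence of regular conditional probabilities, and therefore some topological hypothesis on $X$; separability of $X$ as a Banach space (which is in force throughout the paper) is enough for the disintegration theorem to apply. The rest is essentially bookkeeping: measurability of the integrand in $u$ (immediate from measurability of ${\cal G}$ and continuity of the Gaussian density in its argument), and the two a priori bounds $0<Z(y)\le 1$ that make the division legitimate. I expect the main obstacle in a fully detailed write-up to be this measure-theoretic justification of conditioning in infinite dimensions, but since it is handled once and for all by the abstract framework of \cite{AS10,DS15}, it reduces here to verifying the hypotheses $\mu_0(X)=1$ and measurability of ${\cal G}$ stated in the proposition.
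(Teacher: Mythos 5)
Your argument is correct, and it is essentially the standard one: the paper itself gives no proof of Proposition \ref{wellposed} --- it simply declares it an ``immediate consequence'' of the framework of \cite{AS10} and \cite{DS15} --- and what you have written is precisely the disintegration argument from those references (joint law on $X\times\R^m$, explicit density with respect to $\mu_0\otimes\text{Leb}$, positivity and boundedness of $Z(y)$, regular conditional probability on the Polish space $X$). The only detail you leave implicit, joint measurability of $(u,y)\mapsto\exp\bigl(-\tfrac12|y-{\cal G}(u)|_\Gamma^2\bigr)$ and hence Borel measurability of $y\mapsto Z(y)$ via Fubini, is exactly where the hypothesis that ${\cal G}$ be measurable is used, so it is worth one explicit sentence; otherwise this is a faithful reconstruction of the proof the paper delegates to its citations.
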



        \begin{prop}[Hellinger continuity] \label{propositionhell}
          Assume that $\mu_0(X) = 1,$ and that ${\cal G}\in L^2_{\mu_0}(X).$
          Then there is $C=C(r)$ such that, for all $y_1, y_2 \in \R^m$ with $|y_1|, |y_2|\le r,$
          \begin{equation*}
     \dhell(\mu^{y_1}, \mu^{y_2}) \le C |y_1 - y_2|.
          \end{equation*}
       
          \label{PropHellinger}
          \end{prop}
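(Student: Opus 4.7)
The plan is the standard splitting used in the Bayesian inverse problems framework (as in [AS10]). Let $\Phi(u;y) := \frac{1}{2}|y - \mathcal{G}(u)|_\Gamma^2$ and $Z(y) := \int_X e^{-\Phi(u;y)}\, d\mu_0(u)$. I would take $\mu_0$ itself as the reference measure in the Hellinger distance, which is permissible since Proposition \ref{wellposed} gives $\mu^{y_1}, \mu^{y_2} \ll \mu_0$. Adding and subtracting $Z(y_1)^{-1/2} e^{-\Phi(u;y_2)/2}$ inside the square and using $(a+b)^2 \le 2(a^2+b^2)$ together with $Z(y_2)\le 1$ yields
\begin{equation*}
\dhell(\mu^{y_1},\mu^{y_2})^2 \le \frac{1}{Z(y_1)}\int_X \bigl(e^{-\Phi(u;y_1)/2} - e^{-\Phi(u;y_2)/2}\bigr)^2\, d\mu_0(u) + \bigl|Z(y_1)^{-1/2} - Z(y_2)^{-1/2}\bigr|^2.
\end{equation*}

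Next I would estimate the two pieces. For the first, the identity
\begin{equation*}
\Phi(u;y_1) - \Phi(u;y_2) = \bigl\langle \tfrac{1}{2}(y_1+y_2) - \mathcal{G}(u),\ y_1 - y_2 \bigr\rangle_\Gamma
\end{equation*}
together with Cauchy--Schwarz and $|y_i|\le r$ gives $|\Phi(u;y_1) - \Phi(u;y_2)| \le C(r)(1+|\mathcal{G}(u)|)|y_1-y_2|$. Combined with the elementary inequality $|e^{-a/2}-e^{-b/2}|\le \tfrac{1}{2}|a-b|$ valid for $a,b\ge 0$, and with the hypothesis $\mathcal{G}\in L^2_{\mu_0}(X)$, the first integral is bounded by $C(r)^2|y_1-y_2|^2$. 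The same two ingredients (now using $|e^{-a}-e^{-b}|\le|a-b|$) show $|Z(y_1)-Z(y_2)| \le C(r)|y_1-y_2|$.

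To convert $|Z(y_1)-Z(y_2)|$ into a bound on $|Z(y_1)^{-1/2}-Z(y_2)^{-1/2}|$ I need a positive lower bound on $Z(y_i)$ that is uniform in $|y_i|\le r$. Here I would apply Jensen's inequality to the convex function $z\mapsto e^{-z}$ to get
\begin{equation*}
Z(y_i) \ge \exp\!\Bigl(-\!\int_X \Phi(u;y_i)\, d\mu_0(u)\Bigr) \ge \exp\!\bigl(-C(r)(1+\|\mathcal{G}\|^2_{L^2_{\mu_0}})\bigr),
\end{equation*}
which is a strictly positive constant depending only on $r$ and $\|\mathcal{G}\|_{L^2_{\mu_0}}$. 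Inserting this lower bound into both displayed estimates gives the claimed Lipschitz dependence of $\dhell(\mu^{y_1},\mu^{y_2})$ on $y_1-y_2$.

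The analytic content is entirely routine once the splitting is fixed; the only subtle step is keeping track of the $r$-dependence of the various constants --- through Cauchy--Schwarz on the $y_i$ and especially through the Jensen lower bound on $Z(y_i)$. This is precisely where the $L^2_{\mu_0}$ integrability of $\mathcal{G}$ is essential: a weaker integrability hypothesis would fail to make either the numerator bound or the normaliser lower bound uniform on $\{|y|\le r\}$, so the proposition would no longer follow from this argument.
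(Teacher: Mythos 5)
Your proof is correct and is precisely the standard argument from \cite{AS10} that the paper itself invokes by citation rather than spelling out: the paper states this proposition is ``an immediate consequence of the theory of inverse problems in function space introduced in \cite{AS10}'' and gives no further proof. The splitting of the Hellinger integrand, the Lipschitz estimate on $\Phi(u;\cdot)$ via the polarization identity, and the Jensen lower bound on the normalizer $Z(y)$ are exactly the three ingredients of that standard argument, and your tracking of the $r$-dependence through Cauchy--Schwarz and through the $Z$ lower bound is sound.
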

  

\section{Forward Model}  \label{sec:forward}
          In this section we give the mathematical formulation of the forward model \eqref{fractionalpde}. It is important to note that there is no canonical way to define fractional powers of the base elliptic operator $L_A = -\div (A(x) \nabla_x)$ in bounded domains. In this paper we adopt the spectral definition --see equation \eqref{def:fractional} below. The associated fractional elliptic problem  has been recently studied both analytically   \cite{caffarelli2016fractional} and computationally \cite{nochetto2015pde}. We refer to \cite{acosta2015fractional} for further discussion on different definitions of fractional Laplacians. 
   
 \subsection{Basic Formulation and Spectral Considerations}  \label{ssec:spectralproperties} 
 
The diffusion coefficient $A(x)\in \R^{d\times d}$ of $L_A$ will be assumed to be symmetric, bounded, measurable, and to satisfy a  uniform elliptic condition, see  \eqref{UnifElliptic}.
   Since we are considering the Neumann problem, we restrict the domain of  $L_A$ to  $\Havg,$ and note that $\Lavg$ admits an orthonormal basis of eigenfunctions of $L_A,$ $\psi_k\in \Havg,$ $k\ge 0,$   with corresponding eigenvalues $0<\lambda_1 \le \lambda_2 \nearrow\infty$. This allows us to define, for $0<s < 1$ and $p(x) = \sum_{k=1}^\infty p_k \psi_k(x),$ 
   \begin{equation}\label{def:fractional}
   \Las p(x) = \sum_{k=1}^\infty \lambda_k^s p_k \psi_k(x).
   \end{equation}
The domain of $\Las$ is the space $\Hscalavg$ of functions $p\in \Lavg$ with
\begin{equation}
\|p\|_{\h_{A}^s}^2:=\sum_{k=1}^\infty \lambda_k^s \langle p, \psi_k\rangle_{L^2}^2 <\infty.
\end{equation}
This space has Hilbert structure when endowed with the inner product 
$$\langle p,q \rangle_{\h_A^s} := \sum_{k=1}^\infty \lambda_k^s p_k q_k,$$
where $q = \sum_{k=1}^\infty q_k \psi_k \in \Hscalavg.$
The space $\Hscalavg$ does not depend on $A$. Indeed $\Hsavg$ can be equivalently defined as zero-mean functions in the closure of $C^\infty(D)$ with respect to the norm $\|\cdot \|^2_{H^s} := \|\cdot \|_{L^2}^2 + [\,\cdot\,]^2_{H^s},$
where $$[\,p\,]_{H^s}:= \int_D \int_D \frac{\bigl( p(x)-p(z)\bigr)^2}{|x-z|^{d+2s}} \,dx\, dz,$$
see \cite{caffarelli2016fractional}. Indeed, the analysis in Subsection \ref{ssec:non-smooth} ---see Remark \ref{remark:boundnorms}---   shows that there is $C_s$ independent of $A$ such that $[p]_{H^s} \le C_s \| p \|_{\h_A^s}.$ This will be used in Subsection \ref{ssec:smoothcase} in to order to formulate the Bayesian inverse problem in the case of smooth $A.$ 

     Any functional $f\in \Hmscalavg$ acting on $\Hscalavg$ can be written as $f= \sum_{k=1}^\infty f_k \psi_k$, where $\sum_{k=1}\lambda_k^{-s} f_k^2 <\infty.$ For any such $f$ there is a unique solution $p=p_{s,A}\in \Hscalavg$ to \eqref{fractionalpde} given by
     \begin{equation*}
     p_{s,A}= \sum_{k=1}^\infty \lambda_k^{-s} f_k \psi_k.
     \end{equation*}
In the extreme case $s=0$,  if $f\in \Lavg$ then \eqref{fractionalpde} has a unique solution $p\in\Lavg$. In Section \ref{sec:formulation} we study the spectral properties of the operator $L_A^{-1}$ that maps $g\in \Lavg$ to the solution $p\in \Lavg$ to 
 \begin{align} 
          \begin{split}
          \begin{cases}
          L_A p &= g, \quad \text{in} \,\,\, D, \\
          \partial_A p &= 0, \quad \text{on}  \,\,\,\partial D.
          \end{cases}
          \end{split}
          \end{align}
    The previous paragraph implies that $L_A^{-1}$ is well defined. Moreover $L_A^{-1}$ is continuous, compact, and self-adjoint with respect to the usual $L^2$-inner product.                    
          
          \subsection{The Extension Problem}\label{ssec:extensionproblem}
     In this subsection we introduce the extension problem that will be used to show continuity of the forward map in Subsection \ref{ssec:extensionapproach}.
         
          For uniformly elliptic $A$ and $s \in (0,1)$ let $a:= 1-2s$, and let
          \begin{equation}\label{definitionB}
          B(x) := \left( \begin{matrix} A(x) & 0 \\ 0 & 1 \end{matrix}\right)\in \R^{(d+1)\times(d+1)}.
          \end{equation}
We denote by $P_{s,A}:D\times(0,\infty)\to \R$ the solution to the extension problem 
          \begin{align} \label{Extendedpde1} 
          \begin{split}
          \begin{cases}
            \div( y^a B \nabla P_{s,A }) = 0, & \quad \text{in} \,\,\, D \times (0,\infty), \\
          \partial_A P_{s,A} = 0, & \quad \text{on}  \,\,\,\partial D \times [0, \infty),\\
          P_{s,A}(x,0)= p_{s,A}(x), &  \quad \text{on}  \,\,\, D.
          \end{cases}
          \end{split}
          \end{align}
In weak form \eqref{Extendedpde1} can be formulated as
\begin{equation}\label{eqn:weakform}
  \int_{\Omega}\int_{0}^\infty  \langle B \nabla P_{s,A}  ,  \nabla \phi  \rangle  y^a dy dx = c_s \int_{\Omega} L_A^s p_{s,A}  \phi(x,0) dx,   \quad  \forall \phi\in\Honeavg(D \times (0,\infty), y^adydx).  
\end{equation}

Here $c_s$ is a constant only depending on $s$, $\phi(\cdot, 0)$ is interpreted as the trace of $\phi$ on $D \times \{0 \},$ and  $\Honeavg(D \times (0,\infty), y^adydx)$ is the space of functions in $H^1(D \times (0,\infty), y^adydx)$ satisfying
 \[  \int_{D} \phi(x,y) dx =0 , \quad \text{a.e. }  y\in (0,\infty);\]

For convenience we recall that the weighted Sobolev space $H^1(D \times (0,\infty), y^adydx)$ is defined as the completion of smooth functions $\phi$ under the norm 
\begin{align*}
\lVert \phi \rVert_{H^1(D \times (0,\infty), y^adydx)}^2 & := \int_{D}\int_{0}^{\infty} \lvert \phi(x,y) \rvert^2 y^a dy dx +  \int_{D}\int_{0}^{\infty} \lvert \nabla \phi(x,y) \rvert^2 y^a dy dx
\\&=: \lVert \phi \rVert_{L^2(D \times (0,\infty), y^adydx)}^2  + [ \phi]_{H^1(D \times (0,\infty), y^adydx)}^2 .
\end{align*}
      
          \section{Bayesian Formulation of Fractional Elliptic Inverse Problems} 
          
       \label{sec:formulation}
        In this section we show continuity of the forward map under two sets of regularity conditions on the diffusion coefficient $A$  and the right-hand side $f$ of the elliptic FPDE \eqref{fractionalpde}. These conditions are found in Settings \ref{setting1} and \ref{setting2} below. Continuity of the forward map, combined with Proposition \ref{wellposed}, establishes Theorem \ref{thm:maintheorem1}. In Subsection \ref{ssec:non-smooth} (Setting \ref{setting1})  we impose no regularity on the elliptic diffusion coefficient, whereas in Subsection \ref{ssec:smoothcase} (Setting \ref{setting2}) we assume that it is differentiable. In the former setting solutions to \eqref{fractionalpde} are not necessarily continuous, while in the later setting solutions to \eqref{fractionalpde} are continuous \cite{caffarelli2016fractional}.

          \subsection{Non-smooth Case: Measurements from Bounded Linear Functionals}\label{ssec:non-smooth}
     This subsection is devoted to the proof of Theorem \ref{thm:maintheorem1} in the following setting.
          \begin{setting}\label{setting1}
          The right-hand side $f$ of \eqref{fractionalpde} is in $\Lavg$. We let $E$ be the space of matrix-valued functions $A(x)$ in $L^\infty(D : \Sp)$ for which the elliptic condition \eqref{UnifElliptic} is in place. We let $X:= (0,1)\times E$ and  $Z:= \Lavg.$ We let 
          	\begin{equation*}
 	{\cal F}: X \to Z, \quad (s,A)\mapsto p_{s,A}
 	\end{equation*} 
 which maps inputs $(s,A)\in X$ into the solution $p_{s,A}\in Z$ to the FPDE \eqref{fractionalpde}. Finally, we let ${\cal O}:Z\to \R^m$ be a bounded linear functional, and set ${\cal G}= {\cal O}\circ{\cal F}.$          
          \end{setting}


   
We provide two different proofs. The first one is based on standard results on the stability of the spectrum of compact self-adjoint operators. The second one relies on PDE techniques proposed in \cite{caffarelli2007extension} and \cite{caffarelli2016fractional}, where the fractional equation is interpreted as a Dirichlet to Neumman map of an appropriate elliptic equation on an extended domain.   

 \subsubsection{The Spectral Approach}\label{ssec:spectralapproach}
Let us start with the proof based on spectral methods.  For a given $A \in E$, we recall that as a map between $\Lavg$ into itself, $L_{A}^{-1}$ is a self-adjoint (with respect to the usual $L^2$-inner product) and compact operator. Furthermore, its eigenfunctions coincide with those of $L_A$ and its eigenvalues are the reciprocals of those of $L_{A}$.  Lemma \ref{lem:stability} and Proposition \ref{prop:stability} below are proved in the Appendix for completeness. They are key in the spectral proof of Theorem \ref{thm:maintheorem1}.
    
%
  

    \begin{lem} \label{lem:stability}
    	Let $A , A'\in E$. Then, 
    	\begin{equation}
    	\lVert L_{A}^{-1} - L_{A'}^{-1} \rVert_{op} \leq \frac{C}{\lambda_A \lambda_{A'}} \lVert A - A' \rVert_{\infty},
    	\end{equation}	
    	where $\lVert \cdot \rVert_{op}$ is the operator norm for operators from $\Lavg$ into itself and $C$ is a constant that depends only on the domain $D$.  
    	\label{SpecOpNorm}
    \end{lem}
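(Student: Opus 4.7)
The plan is to argue by the standard energy estimate applied to the difference of the two solutions. Fix $g \in \Lavg$ and set $p := L_A^{-1} g$ and $p' := L_{A'}^{-1} g$; both belong to $\Havg$. First I would write down the weak formulations
\begin{equation*}
\int_D A(x) \nabla p \cdot \nabla \phi \, dx = \int_D g \phi \, dx, \qquad \int_D A'(x) \nabla p' \cdot \nabla \phi \, dx = \int_D g \phi \, dx,
\end{equation*}
valid for all $\phi \in \Havg$, and subtract to obtain
\begin{equation*}
\int_D A \nabla (p - p') \cdot \nabla \phi \, dx = \int_D (A' - A) \nabla p' \cdot \nabla \phi \, dx.
\end{equation*}

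Next I would test with $\phi = p - p'$. The left-hand side is bounded below by $\lambda_A \|\nabla(p-p')\|_{L^2}^2$ using \eqref{UnifElliptic}, while the right-hand side is bounded by $\|A - A'\|_\infty \|\nabla p'\|_{L^2}\|\nabla(p-p')\|_{L^2}$ via Cauchy--Schwarz. Dividing gives
\begin{equation*}
\|\nabla(p - p')\|_{L^2} \le \frac{1}{\lambda_A} \|A - A'\|_\infty \|\nabla p'\|_{L^2}.
\end{equation*}
To control $\|\nabla p'\|_{L^2}$ I would test the second weak formulation against $p'$ itself: ellipticity gives $\lambda_{A'} \|\nabla p'\|_{L^2}^2 \le \int_D g p'\, dx$, and then the Poincar\'e inequality on $\Lavg$ (with a constant $C_P$ depending only on $D$, since $p'$ has zero mean) yields $\|p'\|_{L^2} \le C_P \|\nabla p'\|_{L^2}$, so
\begin{equation*}
\|\nabla p'\|_{L^2} \le \frac{C_P}{\lambda_{A'}} \|g\|_{L^2}.
\end{equation*}

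Chaining these estimates and applying Poincar\'e once more to $p - p' \in \Havg$ to pass from $\|\nabla(p-p')\|_{L^2}$ to $\|p - p'\|_{L^2}$, I get
\begin{equation*}
\|p - p'\|_{L^2} \le \frac{C_P^2}{\lambda_A \lambda_{A'}} \|A - A'\|_\infty \|g\|_{L^2},
\end{equation*}
and taking the supremum over $g$ with $\|g\|_{L^2} \le 1$ delivers the claimed operator-norm bound with $C = C_P^2$, which depends only on $D$. The proof is essentially routine; the only mild care points are working consistently in $\Lavg / \Havg$ (so that Poincar\'e applies with a constant depending only on $D$) and keeping track of the two separate factors of ellipticity, one arising when testing the difference equation and one when bounding $\|\nabla p'\|_{L^2}$ in terms of $\|g\|_{L^2}$. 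I do not anticipate a real obstacle here.
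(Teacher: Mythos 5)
Your proof is correct and follows essentially the same energy-estimate strategy as the paper: subtract the weak formulations, test against $p-p'$, use ellipticity and Cauchy--Schwarz, bound $\|\nabla p'\|_{L^2}$ by a standard energy estimate, and finish with Poincar\'e. The only cosmetic difference is which diffusion coefficient ($A$ versus $A'$) you keep on the left-hand side of the subtracted equation, which merely swaps where $\lambda_A$ and $\lambda_{A'}$ appear in the intermediate bounds while producing the same final constant.
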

    
\begin{prop}\label{prop:stability}
    
    For every fixed $N \in \N$, $A \in E$ there exist constants $C$  and $\delta>0$ (depending on $N$ and  $A$) such that for every $A' \in E$ with $\lVert A - A'\rVert_\infty \leq \delta$ we have
	\[  \left  \lvert  \frac{1}{\lambda_i}  - \frac{1}{\lambda_i'}  \right  \rvert \leq C \lVert  A - A' \rVert_\infty, \quad i=1, \dots, N,  \]
	and
	\[   \lVert \psi_{i} - \psi_i'  \rVert_{L^2}   \leq  C   \lVert A - A' \rVert_\infty,  \quad i=1, \dots, N,\]
	\label{StabilityEigenvalues}
for some orthonormal set  $\{\psi_1, \dots, \psi_N \}$ of eigenfunctions of $L_{A}^{-1}$ with eigenvalues $\frac{1}{\lambda_1}\geq \dots \geq \frac{1}{\lambda_{N}}$ and some orthonormal set  $\{\psi_1', \dots, \psi_N' \}$ of eigenfunctions of $L_{A'}^{-1}$ with eigenvalues $\frac{1}{\lambda_1'}\geq \dots \geq \frac{1}{\lambda_{N}'}$.
\end{prop}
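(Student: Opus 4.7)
The plan is to combine the operator-norm perturbation estimate of Lemma \ref{lem:stability} with classical perturbation theory for compact self-adjoint operators on $\Lavg$.

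First I would fix $\delta_0 := \lambda_A/2$: then for any $A' \in E$ with $\|A - A'\|_\infty \le \delta_0$, the pointwise inequality $A'(x) \ge A(x) - \|A-A'\|_\infty I_d \ge (\lambda_A/2) I_d$ gives $\lambda_{A'} \ge \lambda_A/2$, so Lemma \ref{lem:stability} upgrades to $\|L_A^{-1} - L_{A'}^{-1}\|_{op} \le C_A \|A - A'\|_\infty$ with $C_A$ depending only on $A$ and $D$. The eigenvalue estimate is then immediate: with $\mu_i := 1/\lambda_i$ and $\mu_i' := 1/\lambda_i'$ arranged in decreasing order with multiplicity, Weyl's inequality (equivalently, the min-max characterization) gives $|\mu_i - \mu_i'| \le \|L_A^{-1} - L_{A'}^{-1}\|_{op}$ for every $i$, which is the first bound in the proposition.

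For the eigenfunctions I would use Riesz spectral projectors. Let $\nu_1 > \nu_2 > \dots$ be the distinct eigenvalues of $L_A^{-1}$ with multiplicities $n_1, n_2, \dots$, and pick the smallest $J$ with $n_1 + \dots + n_J \ge N$. Choose disjoint positively oriented simple closed contours $\gamma_j \subset \mathbb{C}$, each encircling $\nu_j$ and no other point of $\sigma(L_A^{-1})$; the minimal distance of the $\gamma_j$'s to the rest of $\sigma(L_A^{-1})$ depends only on $A$ and $N$. By the eigenvalue bound, if $\|A-A'\|_\infty$ is small enough then each $\gamma_j$ still lies in the resolvent sets of both operators and encircles exactly $n_j$ eigenvalues of $L_{A'}^{-1}$. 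The orthogonal projections
\begin{equation*}
P_j := \frac{1}{2\pi i}\oint_{\gamma_j}(zI - L_A^{-1})^{-1}\, dz, \qquad P_j' := \frac{1}{2\pi i}\oint_{\gamma_j}(zI - L_{A'}^{-1})^{-1}\, dz
\end{equation*}
project onto the corresponding eigenspaces; the second resolvent identity together with the uniform bound on $\|(zI - L_A^{-1})^{-1}\|_{op}$ along $\gamma_j$ yields $\|P_j - P_j'\|_{op} \le C_{N,A}\|L_A^{-1} - L_{A'}^{-1}\|_{op}$. Given any orthonormal basis $\{\psi_i\}$ of $\mathrm{range}(P_j)$, once the right side is smaller than $1$ the vectors $P_j'\psi_i$ are linearly independent and span $\mathrm{range}(P_j')$; Gram-Schmidt orthonormalization then produces an orthonormal basis $\{\psi_i'\}$ of $\mathrm{range}(P_j')$ with $\|\psi_i - \psi_i'\|_{L^2} \le C_{N,A}\|A-A'\|_\infty$.

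The principal obstacle is the possibility of eigenvalue multiplicities: within a degenerate eigenspace, eigenfunctions are defined only up to an orthogonal change of basis, so one cannot hope to compare individual $\psi_i$ to individual $\psi_i'$ labelled only by decreasing eigenvalue order. The Riesz-projector route sidesteps this by comparing eigenspaces rather than eigenvectors, and the wording ``for some orthonormal set'' in the statement is precisely what makes this strategy enough to conclude. The only residual bookkeeping arises in the boundary case $n_1 + \dots + n_J > N$, when $\mu_N$ lies strictly inside the $J$-th cluster: there one picks an arbitrary orthonormal subset of size $N - (n_1 + \dots + n_{J-1})$ inside $\mathrm{range}(P_J)$ to form $\psi_{n_1+\dots+n_{J-1}+1},\dots,\psi_N$, and makes the analogous choice inside $\mathrm{range}(P_J')$ after the construction above.
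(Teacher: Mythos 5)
Your eigenvalue estimate is sound and essentially equivalent to the paper's, which likewise deduces $|1/\lambda_i - 1/\lambda_i'|\le \|L_A^{-1}-L_{A'}^{-1}\|_{op}$ from Lemma~\ref{lem:stability} via the min-max characterization. The Riesz-projector route for the eigenfunctions is a genuinely different argument from the paper's, which instead runs an explicit induction on $N$ and bounds the ``tail'' $\sum_{i> k_N} P_i\psi_N'$ directly through a spectral-gap inequality; your contour-integral packaging of the same spectral-gap information into $\|P_j-P_j'\|_{op}$ is cleaner and avoids the index bookkeeping. That part of the comparison is favorable to you.

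There is, however, a genuine gap, and it sits precisely at the multiplicity issue you yourself flagged. You start from an orthonormal basis $\{\psi_i\}$ of $\mathrm{range}(P_j)$, form $\{P_j'\psi_i\}$, and Gram-Schmidt inside $\mathrm{range}(P_j')$. That yields an orthonormal basis of $\mathrm{range}(P_j')$ but in general not a set of \emph{eigenfunctions} of $L_{A'}^{-1}$: when a degenerate eigenvalue $\nu_j$ of $L_A^{-1}$ (multiplicity $n_j\ge 2$) splits under perturbation, $\mathrm{range}(P_j')$ is a direct sum of several distinct eigenspaces of $L_{A'}^{-1}$, and a generic vector in $\mathrm{range}(P_j')$ belongs to none of them. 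The ``for some orthonormal set of eigenfunctions'' latitude in the statement lets you choose \emph{which} eigenfunctions, not drop the requirement that they be eigenfunctions. The repair is to run the projection in the other direction, which is in fact what the paper's proof does: first pick an orthonormal set $\{\psi_i'\}$ of eigenfunctions of $L_{A'}^{-1}$ spanning $\mathrm{range}(P_j')$ and ordered by decreasing eigenvalue (this always exists, since $\mathrm{range}(P_j')$ is a direct sum of eigenspaces of $L_{A'}^{-1}$), then set $\psi_i$ to be the Gram-Schmidt orthonormalization of $P_j\psi_i'$ inside $\mathrm{range}(P_j)$. Because $\mathrm{range}(P_j)$ is a \emph{single} eigenspace of $L_A^{-1}$, every vector in it is automatically an eigenfunction with eigenvalue $\nu_j=\mu_i$, so the resulting $\{\psi_i\}$ are genuine eigenfunctions and the eigenvalue ordering is preserved block by block. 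With this reversal the rest of your argument closes the proof.
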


\begin{proof}[Proof of Theorem \ref{thm:maintheorem1}, Setting \ref{setting1}, spectral approach]

Fix $(s,A)\in X$ and $\veps>0$. Because $f\in L^2(D)$, we may pick $N \in \N$ in such a way that regardless of the orthonormal basis of eigenfunctions $\{ \psi_1, \psi_2 , \dots\}$ of $L_{A}^{-1}$ (with corresponding eigenvalues $\frac{1}{\lambda_1}, \frac{1}{\lambda_2}, \dots$) we have
\[ \max \Bigl\{  \frac{1}{\lambda_1^2},1 \Bigr\}  \sum_{i=N+1}^\infty   \langle \psi_i, f \rangle_{L^2} ^2  < \veps^2. \] 

Let us now take $A' \in E$ with $\lVert A'- A  \rVert_\infty < \delta$, where $\delta$ is as in Proposition \ref{prop:stability}, and consider two bases $\{ \psi_1, \dots, \psi_N, \dots \}$, $\{ \psi_1' , \dots, \psi_N' , \dots  \}$ of $\Lavg$ consisting of eigenfunctions of $L_{A}^{-1}$ and $L_{A'}^{-1}$ for which the first $N$ corresponding eigenfunctions are related as in Proposition \ref{prop:stability}.

Recall that $p_{s,A}:= L_{A}^{-s} f$ may be written as
\[ p_{s,A} = \sum_{i=1}^\infty   \frac{1}{\lambda_i^s} \langle  f, \psi_i \rangle_{L^2} \psi_i .\]
Now, for $s' \in (0,1)$ with $s'\geq s$, it is straightforward to check that $p_{s',A}:= L^{-s'}_{A} f$ solves the equation \eqref{fractionalpde} with fractional power $s$ but with right hand side equal to 
\[ f' := \sum_{i=1}^\infty  \frac{1}{\lambda_{i}^{s'- s}}  \langle  f , \psi_i \rangle_{L^2} \psi_i.\]
Notice that $f'$ belongs to $\Lavg$ since $s' \geq s $ and  $f \in \Lavg$. In particular, it follows that
\begin{equation}
    L_{A}^s (  p_{s,A}- p_{s',A} )  =    f' - f.
\end{equation}
Hence, 
\[      \lambda_1^{2s} \lVert  p_{s,A} - p_{s',A}  \rVert_{L^2}^2  \leq    \sum_{i=1}^\infty  \lambda_i^{2s}   \langle  p_{s,A} - p_{s',A} , \psi_i \rangle_{L^2} ^2    =   \lVert    f' - f  \rVert_{L^2}^2 ,       \]
and so
\begin{equation*}
 \lVert  p_{s,A} - p_{s',A}  \rVert_{L^2} \leq \frac{1}{\lambda_1^s} \lVert    f' - f  \rVert_{L^2} \leq \max \left\{ \frac{1}{\lambda_1} ,1\right\}\lVert    f' - f  \rVert_{L^2} .
\end{equation*}
The norm  $\lVert    f' - f  \rVert_{L^2}$ can be estimated by
\begin{align*}
\begin{split}
 \lVert    f' - f  \rVert_{L^2}^2   &   \leq     \max  \left \{ \left ( \frac{1}{\lambda_1^{s'-s}}  -1  \right)^2  ,  \left ( \frac{1}{\lambda_N^{s'-s}}  -1  \right)^2   \right\} \lVert  f  \rVert^2_{L^2}    
 \\ &  +  \max \left\{  \frac{1}{\lambda_1^{2(s'-s)}}  , 1  \right\}  \sum_{i=N+1}^\infty \langle  f, \psi_i\rangle^2_{L^2}    
 \end{split}
 \\& \leq  C_{A,N} \lvert s' - s \rvert^2    +  C_A \sum_{i=N+1}^\infty  \langle f , \psi_i \rangle^2_{L^2},
 \end{align*}
so that in particular, 
\begin{equation}
\lVert  p_{s,A} - p_{s',A}  \rVert_{L^2} \leq C_{A,N}  \lvert s' - s \rvert   + C_{A}  \left(\sum_{i=N+1}^\infty  \langle f , \psi_i \rangle^2_{L^2} \right)^{1/2} \leq  C_{A,N}  \lvert s' - s \rvert   + C_{A}  \veps.
\label{AuxWellPosed1}
\end{equation}
Changing the roles of $s'$ and $s$, we can show in a similar fashion that even when $0<s' <s $ inequality \eqref{AuxWellPosed1} is still valid. 

Let us now introduce the operators $S_N$ and $S_N'$  
\[  S_N g :=  \sum_{i=1}^N \frac{1}{\lambda_i^{s'}} \langle g ,\psi_i \rangle_{L^2} \psi_i , \quad S_N' g := \sum_{i=1}^N \frac{1}{\lambda_i^{'s'}} \langle g ,\psi_i' \rangle_{L^2} \psi_i', \]
which are truncated versions of $L_{A}^{-s'}$ and $L_{A'}^{-s'}$ respectively. It follows that, 
\begin{align*}
\begin{split}
 \lVert p_{s',A} - S_Np_{s',A}   \rVert_{L^2}^2   \leq   \frac{1}{\lambda_1^{2s'}} \sum_{i=N+1}^\infty  \langle f , \psi_i \rangle_{L^2}^2  <  \veps ^2
 \end{split}
 \end{align*}
and similarly,
\begin{align*}
\begin{split} 
\lVert p_{s',A'} - S_N'p_{s',A'}  \rVert_{L^2}^2  & \leq   \frac{1}{\lambda_1^{'2s'}} \sum_{i=N+1}^\infty  \langle f , \psi_i' \rangle_{L^2}^2
\\& \leq  \frac{1}{\lambda_1^{'2s'}}  \left( \lVert f \rVert^2_{L^2} -  \sum_{i=1}^N \langle f , \psi_i' \rangle_{L^2}^2\right)
\\& \leq  \max\left\{  \frac{1}{\lambda_1^{'2}}, 1 \right\}  \left( \lVert f \rVert^2_{L^2} -  \sum_{i=1}^N \langle f , \psi_i' \rangle_{L^2}^2 \right)
\\ & \leq C_A \left( \lVert f \rVert^2_{L^2} -  \sum_{i=1}^N \langle f , \psi_i \rangle_{L^2}^2\right)  + C_{A,N} \lVert A - A' \rVert_\infty^2
\\& \leq C_A\veps^2 + C_{A,N}\lVert A - A' \rVert_\infty^2
\end{split}
\end{align*}
where the fourth inequality follows from Proposition \ref{StabilityEigenvalues}. The above inequalities combined with Proposition \ref{prop:stability} imply that 
\[  \lVert S_N p_{s',A}   - S_N' p_{s',A} \rVert_{L^2} \leq C_{A,N} \lVert A - A' \rVert_\infty.  \] 
Combining all the previous inequalities with \eqref{AuxWellPosed1}, we conclude that
\begin{equation}
\lVert p_{s,A} - p_{s',A'}   \rVert_{L^2}  \leq  C_{ A,N} \left(  | s- s' | + \lVert A - A' \rVert_\infty  \right)  + C_{A} \veps . 
\label{AuxWellPosed2}
\end{equation}
Therefore, for all $(s',A')$ satisfying  $ | s- s' | + \lVert A - A' \rVert_\infty < \min \left\{ \frac{\veps}{ C_{A,N}}, \delta \right\} $, 
\[  \lVert \mathcal{F}(s,A) - \mathcal{F}(s',A') \rVert_{L^2}< C_A\veps,\]
and continuity of $\mathcal{F}$ is proved.
\end{proof}

\subsubsection{The Extension Approach}\label{ssec:extensionapproach}

Let us now consider the second proof of Theorem \ref{thm:maintheorem1} in Setting \ref{setting1}. This proof serves as an alternative to studying the stability of the spectra of $L_{A}^{-1}$.
\begin{proof}[Proof of Theorem \ref{thm:maintheorem1}, Setting \ref{setting1}, extension approach]

Let $P_{s,A}$ and $P_{s,A'}$ be the solutions to \eqref{Extendedpde1} with inputs $(s,A)$ and $(s,A')$, respectively. Using the test function $\phi:= P_{s,A} - P_{s,A'}$ in the associated weak formulations \eqref{eqn:weakform}   we deduce that
\begin{align*}
 \int_{\Omega}\int_{0}^\infty  \langle B \nabla P_{s,A}  ,  \nabla (P_{s,A} - P_{s,A'})   \rangle  y^a dy dx &= c_s \int_{\Omega} L_A^s p_{s,A}(p_{s,A} - p_{s,A'}) dx
\end{align*}     
and that
\[   \int_{\Omega}\int_{0}^\infty  \langle B' \nabla P_{s,A'}  ,  \nabla (P_{s,A} - P_{s,A'})  \rangle  y^a dy dx = c_s \int_{\Omega} L_{A'}^s p_{s,A'}(p_{s,A} - p_{s,A'}) dx.\]
Since both $L_{A}^s p_{s,A}$ and $L_{A'}^s p_{s,A'}$ are equal to $f$, we deduce that
\[  \int_{\Omega}\int_{0}^\infty  \langle B \nabla P_{s,A}  ,  \nabla (P_{s,A} - P_{s,A'})   \rangle  y^a dy dx  = \int_{\Omega}\int_{0}^\infty  \langle B' \nabla P_{s,A'}  , \nabla (P_{s,A} - P_{s,A'})  \rangle  y^a dy dx.   \]
Subtracting $\int_{\Omega}\int_{0}^\infty \langle   B \nabla P_{s,A'}  ,   \nabla (  P_{s,A} - P_{s,A'}  )   \rangle  y^a dy dx$          
from both sides of the above equation we obtain
\[  \int_{\Omega}\int_{0}^\infty  \langle B \nabla (P_{s,A}  - P_{s,A'}) ,  \nabla (P_{s,A} - P_{s,A'})   \rangle  y^a dy dx  = \int_{\Omega}\int_{0}^\infty  \langle (B' -B) \nabla P_{s,A'}  ,  \nabla (P_{s,A} - P_{s,A'})   \rangle  y^a dy dx   \]         
From this it follows that
\begin{align}
\begin{split}
  &\min\{1 , \lambda_A \} \int_{\Omega}\int_{0}^\infty   \lvert  \nabla ( P_{s,A} - P_{s,A'}   )  \rvert^2 y^a dy dx  
  \\ & \leq  \lVert A - A' \rVert_{\infty}  \left( \int_{\Omega}  \int_{0}^\infty \lvert \nabla P_{s,A'} \rvert^2 y^a dy dx  \right)^{1/2} \left( \int_{\Omega}\int_{0}^\infty   \lvert  \nabla ( P_{s,A} - P_{s,A'}   )  \rvert^2 y^a dy dx \right)^{1/2}.  
  \end{split}
\end{align}
Therefore, for $A'$ such that $\|A- A'\|_\infty \le \lambda_A/2,$
\begin{align}
\begin{split}
\left( \int_{\Omega}\int_{0}^\infty   \lvert  \nabla ( P_{s,A} - P_{s,A'}   )  \rvert^2 y^a dy dx \right)^{1/2} & \leq \max \left\{1, \frac{1}{\lambda_A} \right\}   \lVert A - A' \rVert_{\infty}  \left( \int_{\Omega}  \int_{0}^\infty \lvert \nabla P_{s,A'} \rvert^2 y^a dy dx  \right)^{1/2}  
\\ & = c_s \max \left\{1, \frac{1}{\lambda_A} \right\}   \lVert A - A' \rVert_{\infty}  \left( \int_{\Omega}  L_{A'}^s p_{s,A'}  p_{s,A'}dx  \right)^{1/2}
\\& \leq c_s \max \left\{1, \frac{1}{\lambda_A} \right\}   \lVert A - A' \rVert_{\infty}   \frac{1}{\lambda_1'^{s/2}}  \lVert  f \rVert_{L^2}     
\\& \leq c_s \max \left\{1, \frac{1}{\lambda_A} \right\}   \lVert A - A' \rVert_{\infty}   \frac{1}{\lambda_A^{s/2}}  \lVert  f \rVert_{L^2} 
\end{split}
\label{AuxExtension1}
\end{align}
where the first equality follows using the  weak formulation for the equation satisfied by $P_{s,A'}$ taking as test function $\phi:= P_{s,A'}$ and the second to last inequality follows from the fact that $L_{A'}^s p_{s,A'} = f$, and the last inequality follows from the variational representation of the first eigenvalue of $L_{A'}^{-1}$ and equation \eqref{ineqmineig}.

It has been shown that the trace of the weighted Sobolev space $H^1( D \times (0,\infty), y^ady dx)$ is the space $H^s$, see Section 7 of \cite{caffarelli2016fractional} and references within. In particular, there exists a constant $C_s$ depending only on $s$ such that
\[ [ p_{s,A}- p_{s,A'}]_{H^s} \leq  \lVert p_{s,A}- p_{s,A'}\rVert_{H^s}  \leq C_s  \lVert P_{s,A} - P_{s,A'} \rVert_{H^1(\Omega\times (0,\infty), y^adydx)}.     \]
On the other hand, there exists a constant $C_D$ (only depending on the domain $D$) such that
\begin{equation}
 \lVert P_{s,A} - P_{s,A'} \rVert_{H^1(\Omega\times (0,\infty), y^adydx)} \leq  C_D  [  P_{s,A} - P_{s,A'} ]_{H^1(\Omega\times (0,\infty), y^adydx)}.
 \label{PoincareH1ya}
 \end{equation}
Indeed, this follows from the following considerations. First, by Fubini's theorem the function $P_{s,A}(\cdot, y) -P_{s,A'}(\cdot , y) $ belongs to $H^1(D)$ for almost every $y \in (0,\infty)$. Second, for a.e. $y \in(0,\infty)$ both $P_{s,A}(\cdot, y)$ and $P_{s,A'}(\cdot, y)$ have average (in $x$) equal to zero, and hence by Poincar\'e inequality in $D$
\[ \int_{D} \lvert P_{s,A}(x,y ) - P_{s,A'}(x,y) \rvert^2 dx \leq C_D \int_{D} \lvert \nabla_{x}( P_{s,A}(x,y) - P_{s,A'}(x,y))  \rvert^2 dx,  \text{ a.e. } y \in (0,\infty). \]
Integration with respect to $y^a dy$ yields
\begin{align*}
\begin{split}
\int_{0}^\infty \int_{D} \lvert P_{s,A} -P_{s,A'} \rvert^2 y^a dxdy & \leq C_D \int_{0}^\infty \int_{D} \lvert \nabla_x (P_{s,A}(x,y ) - P_{s,A'}(x,y)) \rvert^2 y^a dxdy 
\\& \leq  C_D \int_{0}^\infty \int_{D} \lvert \nabla (P_{s,A}(x,y ) - P_{s,A'}(x,y))\rvert^2y^adxdy,
\end{split}
\end{align*} 
thus establishing \eqref{PoincareH1ya}. From \eqref{AuxExtension1} and \eqref{PoincareH1ya}, we deduce that 
\[ [p_{s,A} - p_{s,A'}]_{H^s} \leq C_{D,s} \max \left\{1, \frac{1}{\lambda_A} \right\}   \lVert A - A' \rVert_{\infty}   \frac{1}{\lambda_1^s}  \lVert  f \rVert_{L^2}. \]
Finally we use the fact that both $p_{s,A}$ and $p_{s,A'}$ have average zero and Poincar\'e inequality in $H^s$ (which follows for instance from a compactness argument and Theorem 7.1 in \cite{di2012hitchhikers}) to conclude that 
\begin{equation}\label{eqn:4.8}
\lVert p_{s,A} - p_{s,A'}\rVert_{H^s} \leq  C_s[p_{s,A} - p_{s,A'}]_{H^s} \leq    C_{D,s} \max \left\{1, \frac{1}{\lambda_A} \right\}   \lVert A - A' \rVert_{\infty}   \frac{1}{\lambda_1^s}  \lVert  f \rVert_{L^2}.
\end{equation}

The above shows that for every fixed $s$ the map $ A \mapsto \mathcal{F}(s,A)$ is locally Lipschitz continuous when the range is not only endowed with the $L^2$-norm, but in fact when it is endowed with the stronger $H^s$-norm. The continuity of the map $\mathcal{F}$ in the first coordinate (i.e. fixing $A$ and changing $s$) can be obtained directly from the representation of $L_A^s$ in terms of eigenvalues and eigenfunctions.
\end{proof}

\begin{remark}
If the marginal of the prior $\mu_0$ on its first coordinate was supported in $[\bar{s} , 1]$ for $\bar{s}>0$, then we could take the space $Z$ to be equal to $H^{\bar{s}}$ with norm $\lVert \cdot \rVert_{H^{\bar{s}}}$. The above proof shows that the forward map $\mathcal{F}$ is indeed continuous. Notice that in that case, the observation map $\mathcal{O}$ may be constructed using $H^{-\bar{s}}$; the bottom line is that the map $\mathcal{G}= \mathcal{O}\circ \mathcal{F}$ is continuous. 
\end{remark}

\begin{remark}\label{remark:boundnorms}
As in \eqref{AuxExtension1} and \eqref{eqn:4.8}, and using the trace theorem for $H^1(D \times (0,1), y^a dy dx)$ we deduce that
\[ [ p_{s,A}] _{H^s}  \leq \lVert  p_{s,A} \rVert_{H^s} \leq C_s \lVert P_{s,A} \rVert_{H^1} =  C_s [ P_{s,A} ]_{H^1} + C_s \lVert  P_{s,A} \rVert_{L^2} \leq      C_s  \| p_{s,A} \|_{\mathcal{H}_A^s}.   \]
Since we also have 
\[    [  p_{s,A} ]^2_{\mathcal{H}^s_A}   = \sum_{i=1}^\infty  \frac{\langle  f ,  \psi_i\rangle^2_{L^2}}{\lambda_i^s}  \leq \frac{1}{\lambda_1^s}  \lVert f \rVert_{L^2}^2,  \]
we conclude that
\begin{equation}\label{eq:remarkhs}
 [ p_{s,A} ]_{H^s} \leq   \frac{C_s}{\lambda_1^{s/2}}  \lVert f \rVert_{L^2}\le \frac{C_s}{\lambda_A^{s/2}} \lVert f \rVert_{L^2}.
\end{equation}
We will use this identity in the next subsection when we discuss the continuity of the map $\mathcal{G}$ in the pointwise observations case.
\end{remark}

 \subsection{Smooth Case: Pointwise Observations}\label{ssec:smoothcase}
          
      The presentation of this subsection is parallel to that of the previous one. Here we prove Theorem \ref{thm:maintheorem1} under the following setting:
          \begin{setting}\label{setting2}
          We let $f\in \Lpavg$ for some $p \geq 2,$ and let  $s_p := \frac{d}{2p}.$ We let $X:= (s_p,1] \cap C^1( \overline{D} : \mathbb{S}_{+}(\R^d)),$ and let $Z:= C^0(D).$ 
         As before, we let 
          	\begin{equation*}
 	{\cal F}: X \to Z, \quad (s,A)\mapsto p_{s,A}
 	\end{equation*} 
  map inputs $(s,A)\in X$ to the solution $p_{s,A}\in Z$ to the fractional PDE \eqref{fractionalpde}. Finally, we let ${\cal O}:Z\to \R^m$ be a bounded linear functional, and set ${\cal G}= {\cal O}\circ{\cal F}.$          
          \end{setting}        
          
          \begin{remark}	The assumptions on $f$ and $A$ in Setting \ref{setting2} guarantee that the solution $p_{s,A}$ is continuous \cite{caffarelli2016fractional}.  Pointwise observations of the solution constitute a natural example of observation map  $\mathcal{O}.$ That is, 
 	\[  \mathcal{O}(p) := \left( \begin{matrix}   p(x_1) \\ \vdots \\ p(x_m)    \end{matrix} \right), \]         
 	for some fixed points $x_1, \dots, x_m \in D$.
 	
          To ease the notation we will often write $C^1$ instead of $C^1( \overline{D} : \mathbb{S}_{+}(\R^d) ).$  No confusion will arise from doing so. Note that $C^1$ is contained in the space $E$ of uniformly elliptic matrix-valued functions of Setting \ref{setting1}.       
          \end{remark}
          
          We are ready to prove Theorem \ref{thm:maintheorem1} in the above setting. The proof combines the analysis of the non-smooth case in Subsection \ref{ssec:non-smooth} with the regularity theory of Caffarelli and Stinga \cite{caffarelli2016fractional} to show continuity of the forward map. The formulation of the inverse problem then follows from Proposition \ref{wellposed}.

  \begin{proof}[Proof of Theorem \ref{thm:maintheorem1}, Setting \ref{setting2}]  
   	Let $A \in C^1$, $s \in (s_p,1]$ and let $\veps>0$. Consider $p_{s,A}:= L_A^{-s} f$. We have already shown that there exists $\delta>0$ such that if $\lvert s-s' \rvert  + \lVert A - A' \rVert_\infty < \delta$  then 
\[ \lVert p_{s,A} - p_{s',A'} \rVert_{L^2} \leq  \veps. \] 
Now, from \cite{caffarelli2016fractional} it follows that $p_{s,A} = L^{-s}_A f $  is an $\alpha$-H\"{o}lder continuous function with $\alpha =2s - \frac{d}{p}$ if $2s - \frac{d}{p} <1,$ and $\alpha$-H\"{o}lder continuous for any $\alpha<1$ if $2s - \frac{d}{p}\ge 1$. Moreover, its H\"{o}lder seminorm is bounded by
\begin{equation}
 [p_{s,A}]_{C^{0,\alpha}(D)}  \leq   C( \lVert p_{s,A} \rVert_{L^2}  + [p_{s,A}]_{H^s} + \lVert  f \rVert_{L^p}  ),
\label{RegEstimates}
\end{equation}
for a constant $C$ depending on $s$, the modulus of continuity of $A$ and the ellipticity of $A$  (see  Theorem 1.2 in \cite{caffarelli2016fractional}). Using Remark \ref{remark:boundnorms} we deduce that
\[ [p_{s,A}]_{C^{0,\alpha}(D)} \leq C  \lVert f  \rVert_{L^p},  \]
for a constant that depends on $s$, on the modulus of continuity of $A$, the ellipticity of $A$, and the first eigenvalue of $L_A$.

Hence, for any $(s',A') \in X$  
\[  [p_{s',A'}]_{C^{0,\alpha}(D)}  \leq   C( \lVert p_{s',A'} \rVert_{L^2}  + [p_{s',A'}]_{H^s} + \lVert  f \rVert_{L^p}  ),  \]
where the constant $C$ may be taken to be uniform over all $s',A'$ with  $ \lvert s- s' \rvert +\lVert A - A' \rVert_{C^1} \leq \delta$ for some $\delta>0$ small enough.

On the other hand, a simple argument shows that for arbitrary $\alpha$-H\"{o}lder continuous functions $\phi_1, \phi_2$ we have
\[ \lVert \phi_1 - \phi_2 \rVert_\infty  \leq C_{D,\alpha} \max \left\{ [\phi_1]_{C^{0,\alpha}} , [ \phi_2]_{C^{0,\alpha}}   \right\}^{d/(2\alpha + d)} \cdot \lVert \phi_1 - \phi_2 \rVert_{L^2}^{2\alpha / (2 \alpha + d)}  ,\]          
 where $C_{D,\alpha}$ is a constant that depends on $\alpha$ and the domain $D$. We can then deduce that  if $\lvert s - s' \rvert + \lVert A - A' \rVert_{C^1} <  \delta$  then, 
\[ \lVert  p_{s,A} - p_{s',A'} \rVert_\infty   \leq  C \lVert f \rVert_{L^p}^{d/ (2 \alpha + d)} \veps^{2 \alpha/ (2 \alpha + d)}. \]

This shows the continuity of the forward map $\mathcal{F}: X \rightarrow Z$ when $X=( s_p ,1)  \times C^1  $ and $Z = C^0(D)$ is endowed with the sup norm.    
\end{proof}

\section{Hellinger Continuous Dependence on Observations}\label{sec:hellinger}

We now study the Hellinger continuity of the posterior distribution $\mu^y$ as $y$ changes. In light of Proposition \ref{propositionhell} and the discussion proceeding it, we impose some conditions on the priors $\mu_0$ for which we can guarantee the stability of posterior distributions.

\begin{assumption}\label{assumptionprior}
	In what follows let  $s_{-}=0$ for Setting \ref{setting1}, and $s_{-}=s_p$ for Setting \ref{setting2}. We assume that the prior $\mu_0$ satisfies $\mu_0(X) = 1$ and that its marginals $\mu_{0,1}$ and $\mu_{0,2}$ on the first and second variables satisfy
\begin{enumerate}	
\item For every polynomial $q$ in two variables   
\[  \int\left|q \left(\frac{1}{\lambda_A}, \lVert A\rVert   \right) \right| d \mu_{0,2}(A)   < \infty. \]
\item  The support of $\mu_{0,1}$ is contained in $[s_{-}+\veps, 1 - \veps]$ for some small enough $\veps>0$ .
\end{enumerate}
\end{assumption}

Notice that with these assumptions, $\mu_{0,2}$ is allowed to give positive mass to sets of $A$ with large ellipticity and large norm, provided the mass decays fast enough. In particular, $\mu_{0,2}$ can be chosen to be log-Gaussian (like in Remark \ref{RemarkLogGaussian} below). In contrast, notice that the assumptions on $\mu_{0,1}$, that we make for simplicity, rule out any possible degeneracy in the estimates obtained in Section \ref{sec:formulation} as $s$ approaches the endpoints of the interval $[s_{-},1]$.


 \begin{remark}
 	\label{RemarkLogGaussian}
 One example of interest of a prior $\mu_0$ for which $\mu_{0,2}$ satisfies the first condition in Assumption \ref{assumptionprior} is the following. Suppose that we pick $A$ according to
 \[  A (x) : = e^{-v(x)}  I_d,  \] 
 where we recall $I_d$ is the identity matrix and where $v$ is chosen according to a Gaussian process in the appropriate Banach space. It follows that 
 \[  \frac{1}{\lambda_A}  \leq  e^{ \lVert  v \rVert} \quad \text{and } \quad \lVert A \rVert \leq  e^{ \lVert  v \rVert}. \]
We conclude that for any polynomial $q$ in two variables, 
\[  \int_E\left \lvert q \left(\frac{1}{\lambda_A}, \lVert A\rVert   \right)  \right\rvert d \mu_{0,2}(A)   <\infty,  \]
thanks to properties of Gaussian processes.
 \end{remark}

\begin{proof}[Proof of Theorem \ref{thm:maintheorem2}]
Since in both Settings \ref{setting1} and \ref{setting2} we assume that the observation map ${\cal O}$  is a bounded functional, the proof reduces to showing that ${\cal F}\in L^2_{\mu_0}(X)$ by Proposition \ref{propositionhell}.

The proof in Setting \ref{setting1} is straightforward, since for every $(s,A) \in X$ we have
\begin{equation*}
\|{\cal F}\bigl(s,A \bigr) \|_{L^2} \le \frac{\|f\|_{L^2}}{\lambda_{1}^s}  \le \max \left\{1, \frac{1}{\lambda_{1}} \right\}  \|f\|_{L^2} \leq \max \left\{1, \frac{C_D}{\lambda_A} \right\}  \|f\|_{L^2} ,
\end{equation*}
where we are using $\lambda_{1}$ to denote the first (nonzero) eigenvalue of $L_{A}$; the last inequality follows from the variational formula for the first nonzero eigenvalue of $L_{A}$  (see the Appendix). Assumption \ref{assumptionprior} on $\mu_0$ guarantees  that $\cal {F}$ belongs to $L^2_{\mu_0}(X)$. Notice that in this case we are not fully using Assumption \ref{assumptionprior}. Indeed, we are only using the fact that $1/ \lambda_A$ is square integrable.

Let us now consider the proof in setting \ref{setting2}. We base our analysis on equations \eqref{eq:remarkhs} and \eqref{RegEstimates}. Recall that the constant appearing in the regularity estimates in \eqref{RegEstimates} depends on the ellipticity of $A$, the modulus of continuity of $A$ and $s$, but their explicit dependence is not provided in \cite{caffarelli2016fractional}. For our purposes, understanding the dependence of the constants on the parameters of the problem is important. Fortunately, the dependence of the constants in the estimates in \cite{caffarelli2016fractional} can be tracked down by modifying slightly some of the crucial estimates and inequalities in \cite{caffarelli2016fractional}. We point out the steps that need to be taken to achieve this.

First, the constant in the Caccioppoli inequality in Lemma 3.2 in \cite{caffarelli2016fractional} can be written as
\[ C = \tilde{C} \left(\frac{\Lambda}{\lambda }\right)^2,  \] 
where $\tilde{C}$ is universal and in particular does not depend on $A$. This follows directly from the proof provided in \cite{caffarelli2016fractional}, given that the ``Cauchy inequality with $\veps>0$'' can be applied with $\veps =\frac{\lambda}{4(2 \Lambda +1 ) } $.

Second, the approximation Lemma (Corollary 3.3 in \cite{caffarelli2016fractional}) is deduced using the Caccioppoli estimate mentioned above and a compactness argument. It is at this stage that one loses the explicit dependence of constants on ellipticity. One can actually restate this approximation lemma by modifying the definition of ``normalized solution'' $U$ given in the mentioned corollary. Indeed, one can replace the normalization condition to 
\[ \int_{B_1}U(x,0)^2 dx + \int_{B_1^*} U^2 y^a dy dx \leq \left(\frac{\lambda}{\Lambda}\right)^2. \]
Then, the same argument in \cite{caffarelli2016fractional}, shows that the number $\delta$ can be chosen to have the form
\[ \delta= \tilde{\delta}(\veps) \cdot \frac{\lambda}{\Lambda}.\]

With these modifications, one can follow the arguments in \cite{caffarelli2016fractional} to ultimately prove that the constant in \eqref{RegEstimates} depends polynomially on the ellipticity and the modulus of continuity of $A$. This dependence on ellipticity and modulus of continuity is all we need to guarantee that for $\mu_0$ satisfying Assumption \ref{assumptionprior}, the forward map $\mathcal{F}$ belongs to $L^2_{\mu_0}(X)$. 

\end{proof}

          \section{Example} \label{sec:example}
The purpose of this section is to illustrate some aspects of the theory in an analytically tractable set-up. A more thorough numerical investigation is beyond the scope of this work.  Unlike in the rest of the paper, we assume here $A\equiv 1$ to be known, so that $\Las = -\Delta^s$. Thus we restrict our attention to the case where the only uncertainty arises from the order $s,$  and we will henceforth  drop $A$ from the notation. 
\subsection{Framework}
We consider the toy model
\begin{align}\label{toyexample}
\begin{cases}
-\Delta^s p &= \cos(bx)-\frac{2}{b}\sin(b\pi),  \quad \text{in} \,\,\, [-\pi,\pi], \\ 
p'(-\pi)&=p'(\pi)=0.
\end{cases}
\end{align}
This framework allows us to easily write the eigenfunctions and eigenvalues of the Neumann Laplacian:
\begin{equation}
\phi_k(x) = \cos(kx), \quad \quad \lambda_k = k^2, \quad k=1, 2, \ldots.
\end{equation}
Moreover $f(x) :=  \cos(bx)-\frac{2}{b}\sin(b\pi)$ has zero mean and admits the Fourier representation
$$f(x) = \sum _{k=1}^\infty \frac{(-1)^k}{b^2-k^2} \cos(kx), \quad -\pi\le x \le \pi.$$
The solution $p= p_s$ to \eqref{toyexample} is thus given by 
\begin{equation*}
p_s(x) = \sum_{k=1}^\infty  \frac{(-1)^k}{k^{2s}(b^2 - k^2)}\cos(kx), \quad -\pi\le x \le \pi.
\end{equation*}
This expression gives a tangible representation of the forward map $\F(s) := p_s.$ Note that, for any $0<s\le 1,$ $p_s$ is continuous and thus can be evaluated pointwise. We now describe our observation model.

Let $m\ge 1.$ We define a grid of $m$ points in $[-\pi,\pi]$ as follows. If $m=1$ we let $x_1=\pi.$ If $m\ge 2$ we let $h = 2\pi/(m-1)$ and set $x_j = -\pi+ jh, \, j =0,\ldots, m-1.$ We then define, for continuous $p$, ${\cal O}(p):= [p(x_1), \ldots, p(x_m)]^T,$ and ${\cal G} = {\cal O}\circ {\cal F}.$

 With the above definitions the inverse problem of interest can be written as 
\begin{equation}\label{invtoy}
y = {\cal G}(s) + \eta, 
\end{equation}
where  ${\cal G}(s) = [p_s(x_1), \ldots, p_s(x_m)]^T\in \R^m,$ and $\eta$ is assumed to be Gaussian distributed, $\eta\sim N(0,\gamma^2 I_{m\times m}).$

\subsection{Numerical Illustrations}
In this subsection we represent some posterior measures on the order $s$ of equation \eqref{toyexample}. We put a uniform prior on $s$,  $s\sim \mu_0\equiv {\cal U}[0,1],$ and generate synthetic data from the underlying parameter $s^* = 0.7,$ according to the model \eqref{invtoy} with $b=1/2.$ We consider three different values for: i) the number of pointwise observations $m$; and (ii) the level of the noise in the observations, $\gamma.$  The results are shown in Figure  \ref{figurepost}. The posterior is expected to concentrate around the ``true" underlying parameter $s=0.7$ in the regimes $m\to\infty$ or $\gamma\to 0,$ and this can be observed in Figure \ref{figurepost}. We remark that the plotted posteriors $\mu^y(ds)$ depend of course of the realization of the data $y$, and the plots below represent only one such realization. We conducted many experiments all of which demonstrate this concentration phenomena. In particular, it is perhaps surprising that even with one pointwise observation $(m=1)$ accurate reconstruction of the order $s$ is possible provided that the observation noise $\gamma$ is small enough.
\begin{center}
\begin{figure}[h!] 
\centering
 \subfloat{%
  \begin{tabular}{c}
   \includegraphics[width=.28\linewidth]{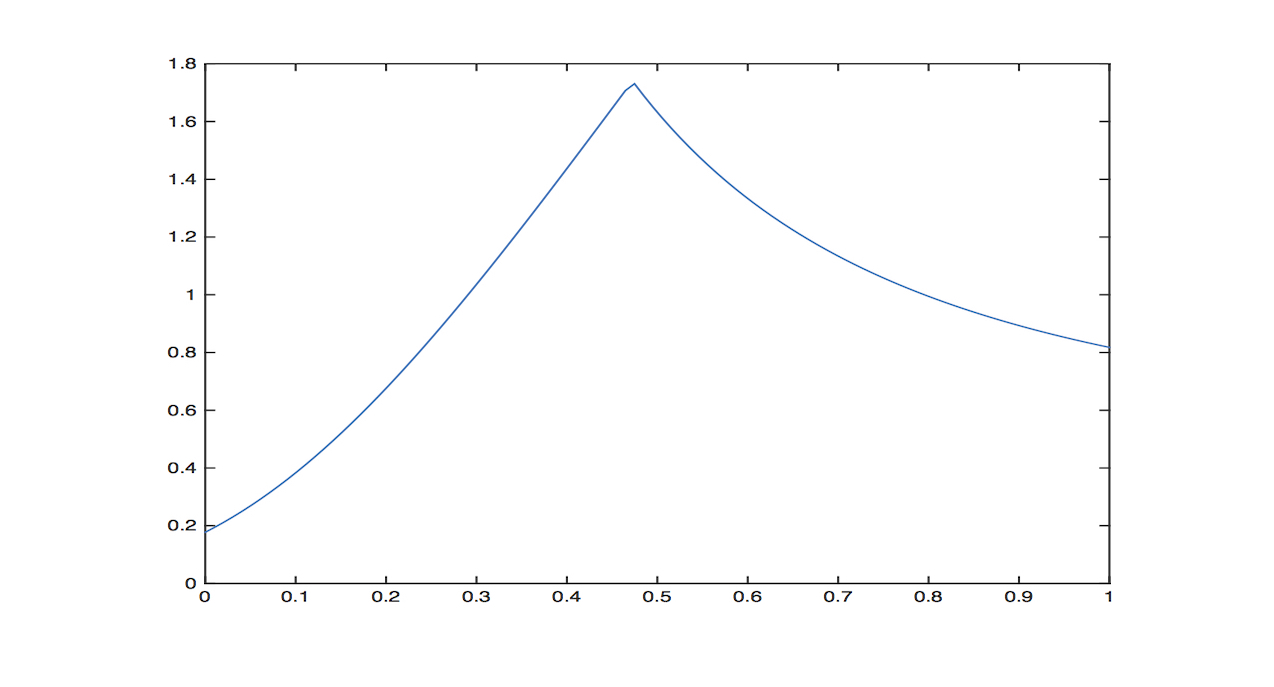} \\
   \includegraphics[width=.28\linewidth]{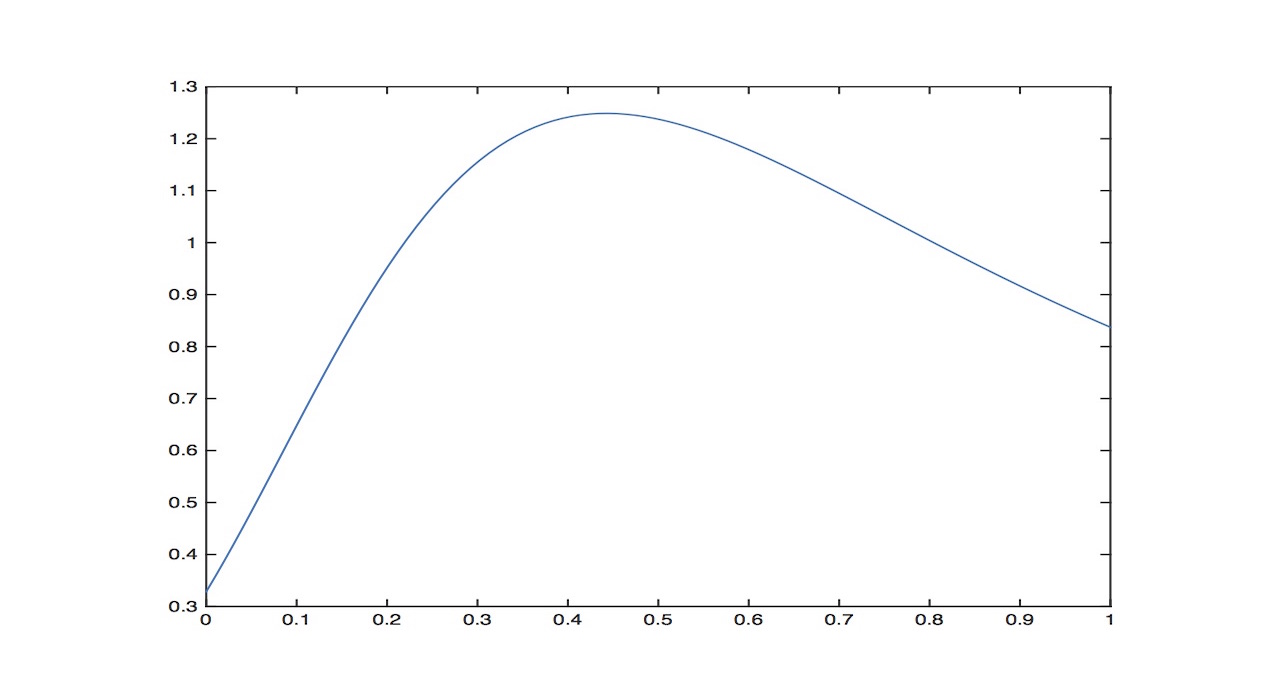} \\
   \includegraphics[width=.28\linewidth]{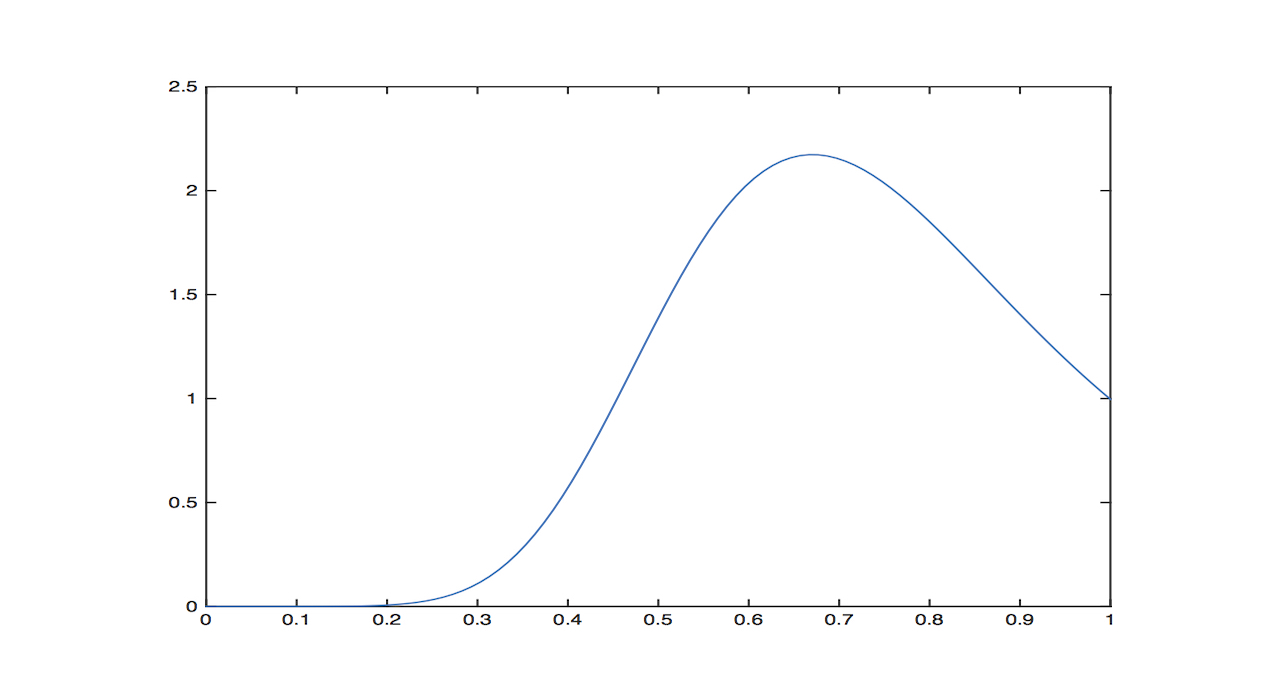}
  \end{tabular}
 }%
 \subfloat{%
  \begin{tabular}{c}
   \includegraphics[width=.28\linewidth]{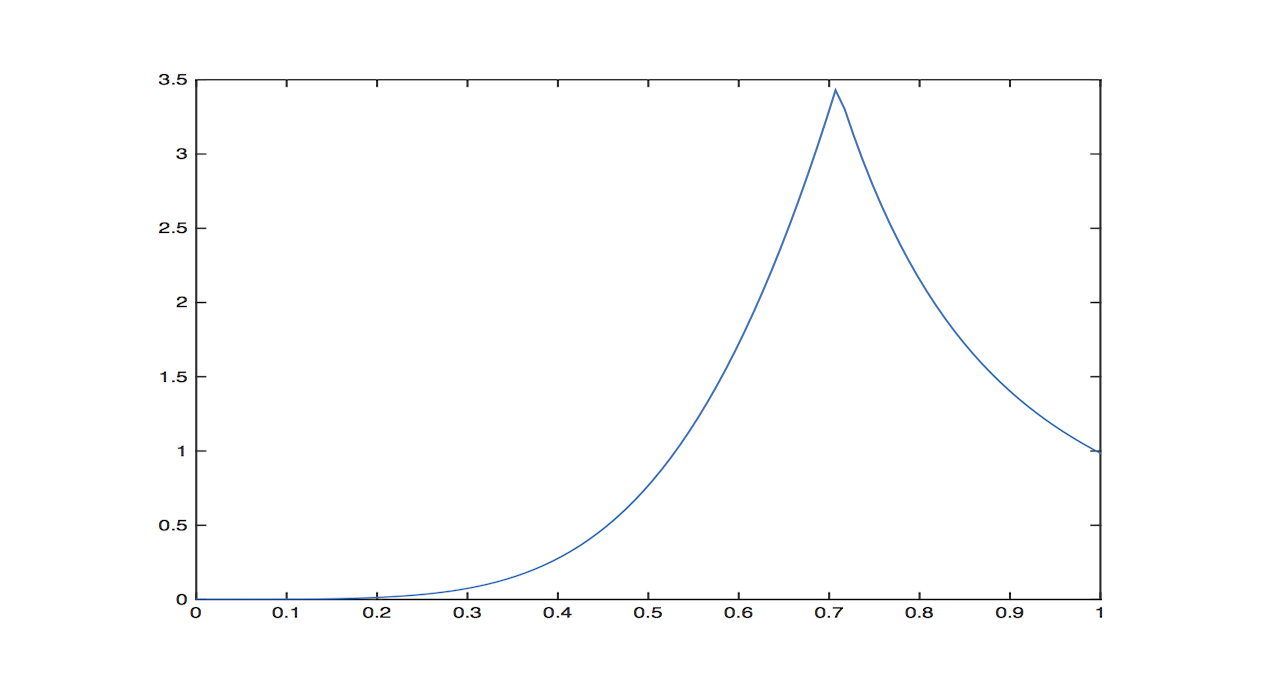} \\
   \includegraphics[width=.28\linewidth]{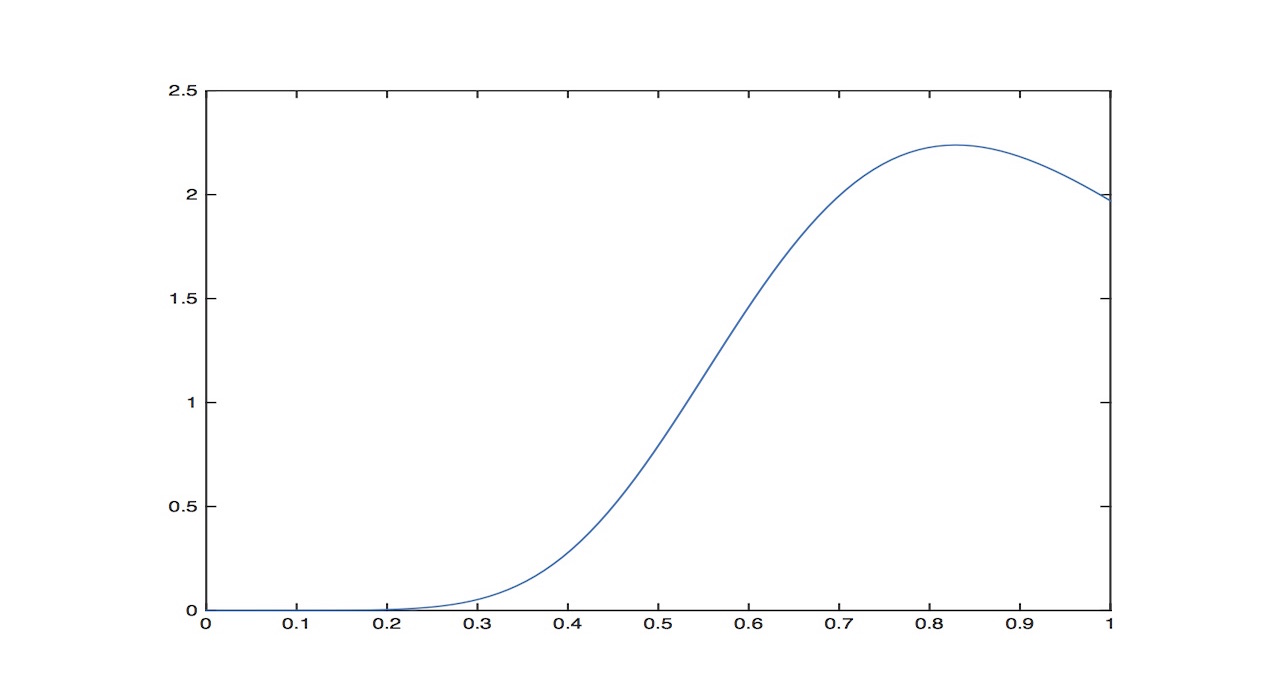} \\
   \includegraphics[width=.28\linewidth]{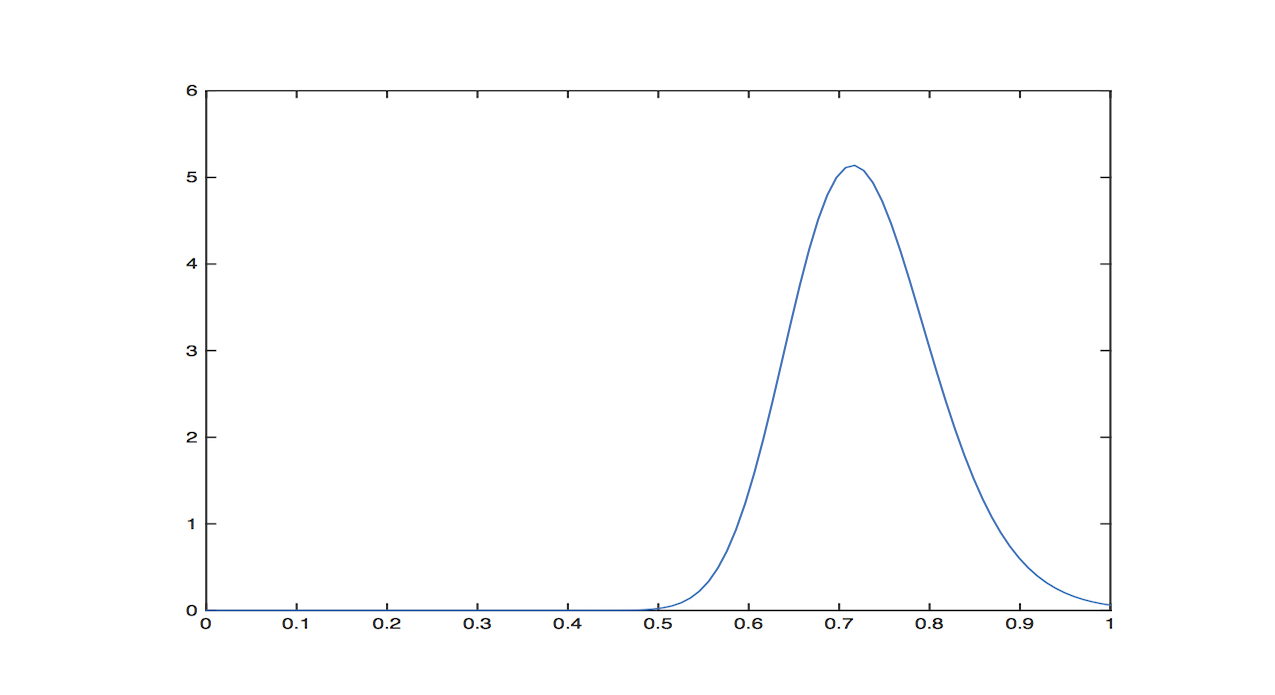}
  \end{tabular}
 }%
 \subfloat{%
  \begin{tabular}{c}
   \includegraphics[width=.28\linewidth]{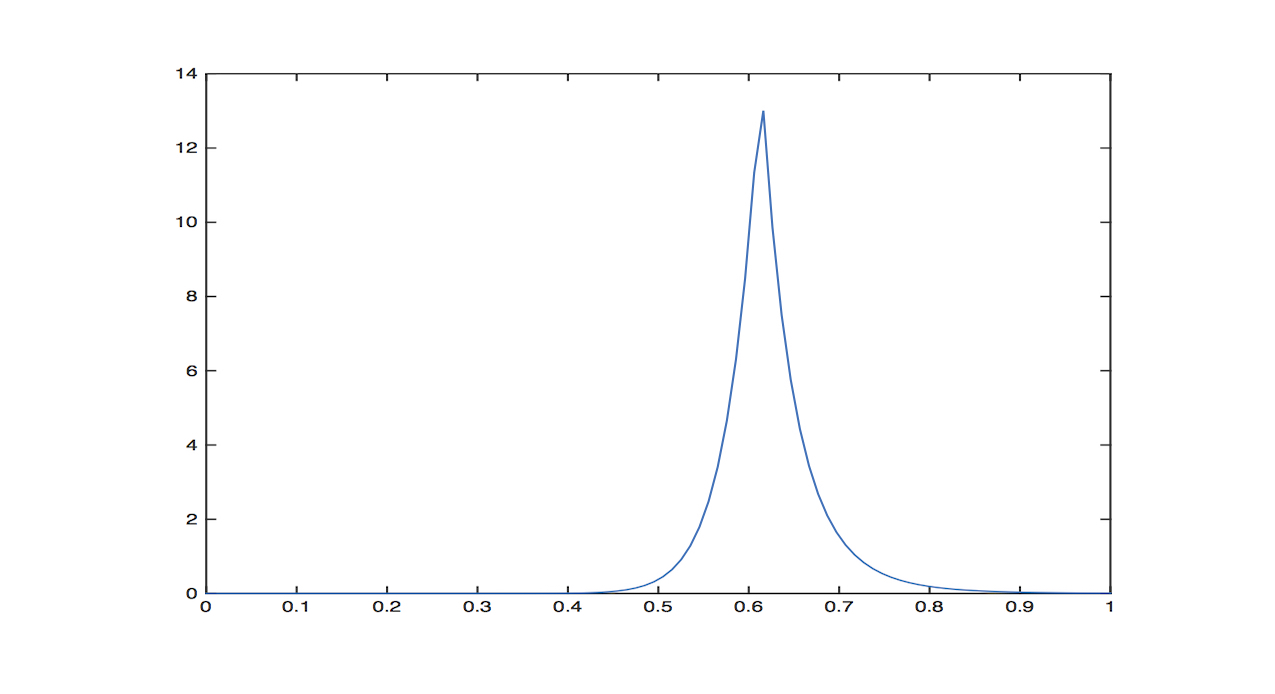} \\
   \includegraphics[width=.28\linewidth]{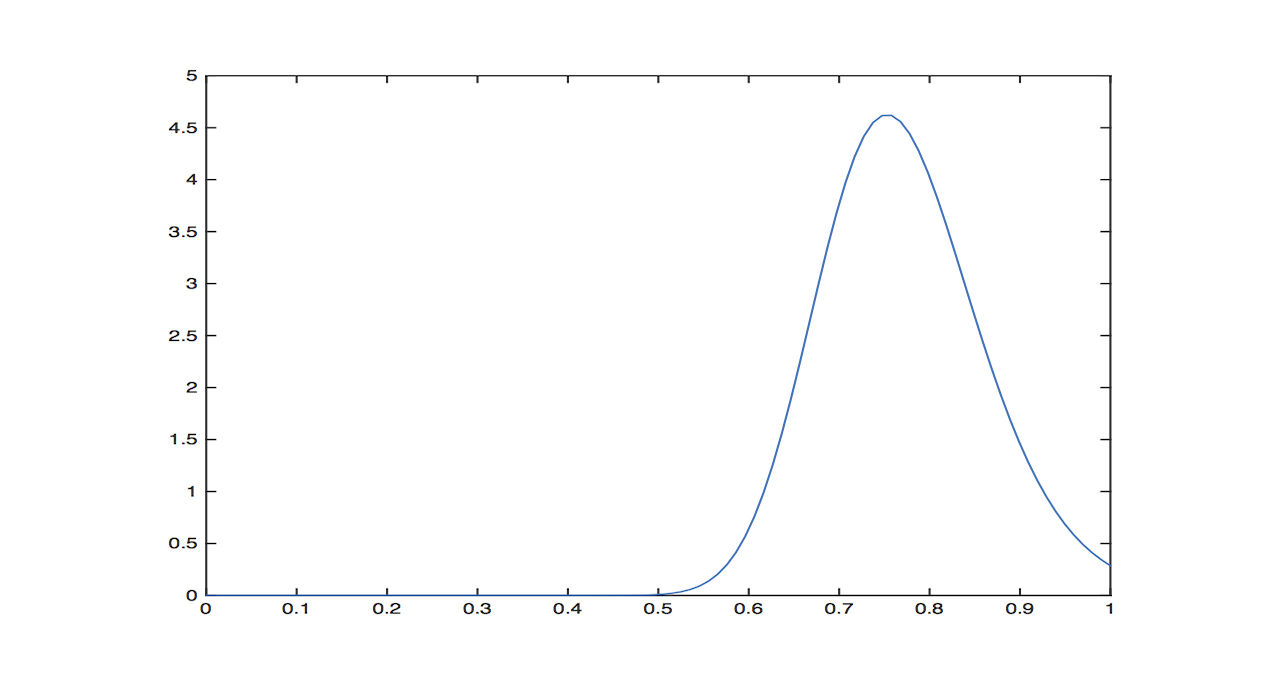} \\
   \includegraphics[width=.28\linewidth]{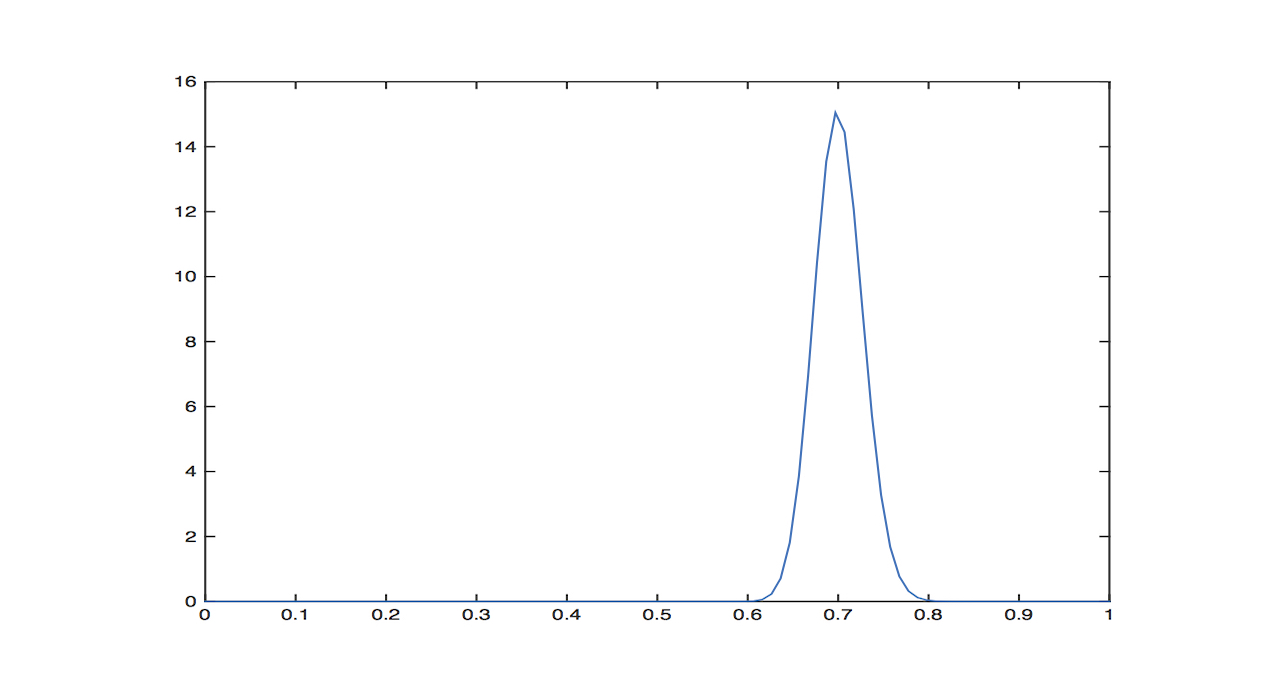}
  \end{tabular}
 }%
\caption{\label{figurepost}Posterior measures on the order $s.$ The data is generated from $s^*=0.7.$ The three rows correspond, respectively, to $m=1,\, 100, \, 10000.$ The three columns correspond to $\gamma=0.3, \, 0.15, \, 0.075.$}
\end{figure}

\end{center}

\section{Conclusions} \label{sec:conclusion}
We conclude by summarizing the main outcomes of this work, and by describing some future research directions.
\begin{itemize}
\item We provide the fundamental framework for Bayesian learning of the order and diffusion coefficient of a  FPDE. 
\item We combine two thriving research areas: the Bayesian formulation of inverse problems in function space, and the analysis of regularity theory for elliptic FPDEs.
\item We generalize the Bayesian formulation of (integer order) elliptic inverse problems \cite{dashti2011uncertainty} to the fractional case. We also generalize the existing theory by considering a matrix-valued ---rather than scalar--- diffusion coefficient.
\item There are many research questions that stem from this paper. First, it would be interesting to derive posterior consistency and weak convergence results such as those available for the (integer) elliptic problem (\cite{SV13} and \cite{dashti2011uncertainty}). Second, we intend to investigate the computational challenges that arise from the inversion of nonlocal models. Finally, there is applied interest in the learning of variable order models, where the order of the equation is allowed to vary throughout the physical domain. 
\end{itemize}

\section{Appendix}

In this appendix we provide a proof of Lemma \ref{lem:stability} and Proposition \ref{StabilityEigenvalues}.

   \begin{proof}[Proof of Lemma \ref{lem:stability}]
   	Let $g \in \Lavg,$ and let $p:= L_{A}^{-1} g$, $p':= L_{A'}^{-1} g$.  Considering the test function $p-p'$ we deduce that
   	\[    \int_{D} \big\langle A \nabla p, \nabla (p-p')   \big\rangle \,  dx  = \int_{D} g(p-p') \, dx =   \int_{D} \big\langle A'\nabla p' , \nabla (p-p')   \big\rangle  \, dx.   \]
   	Subtracting $\int_{D} \big\langle  A' \nabla p , \nabla (p-p') \big\rangle \, dx$ from both sides of the above equation we obtain 
   	\[ - \int_{D} \big\langle (A-A') \nabla p  ,  \nabla (p - p') \big\rangle \, dx =  \int_{D} \big\langle A' \nabla (p - p' )  ,  \nabla (p - p') \big\rangle \, dx. \]
   	By Cauchy-Schwarz inequality the left hand side of the above expression is bounded above by
   	\[   \lVert A- A' \rVert_\infty   \left( \int_{D} \lvert \nabla p  \rvert^2 dx \right)^{1/2}\left( \int_{D} \lvert \nabla (p - p')\rvert^2 dx \right)^{1/2}.     \]
   	Hence,
   	\begin{align*}
   	\begin{split}
   	\lambda_{A'} \int_D \lvert \nabla (p -p') \rvert^2   dx  & \leq   \int_{D} \big\langle A' \nabla (p - p' )  ,  \nabla (p - p') \big\rangle dx 
   	\\&  \leq  \lVert A- A' \rVert_\infty   \left( \int_{D} \lvert \nabla p  \rvert^2 dx \right)^{1/2}\left( \int_{D} \lvert \nabla (p - p')\rvert^2 dx \right)^{1/2}. 
   	\end{split}
   	\end{align*}
   	We conclude that, 
   	\begin{equation*}
   	\left(\int_D \lvert \nabla (p -p') \rvert^2   dx \right)^{1/2} \leq \frac{1}{\lambda_{A'}}\lVert A- A' \rVert_\infty   \left( \int_{D} \lvert \nabla p  \rvert^2 dx \right)^{1/2}. 
   	\end{equation*}
    The above inequality combined with Poincar\'e inequality and a standard energy argument yield 
   	\begin{equation*}
   	\| p - p' \|_{L^2}  \leq  C \left(\int_D \lvert \nabla (p -p') \rvert^2   dx \right)^{1/2} \leq \frac{C}{\lambda_{A'} \lambda_A}\lVert A- A' \rVert_\infty   \| g \|_{L^2(D)},
   	\end{equation*}
   	where $C$ is a constant only depending on $D$. Therefore, 
   	\[    \|  L^{-1}_A - L^{-1}_{A'} \|_{op} \leq   \frac{C}{\lambda_{A'} \lambda_A}\lVert A- A' \rVert_\infty.      \]
   \end{proof}

Before proving Proposition \ref{StabilityEigenvalues} we show that for arbitrary uniformly elliptic $A$ and $A'$ we have that
\begin{equation}\label{EllipticityConst}
|\lambda_A - \lambda_{A'}| \le \|A- A'\|_\infty.
\end{equation}
For an arbitrary $d\times d$ matrix $B$ we denote by $|B|$ its spectral norm. If $B$ is positive definite, i.e. $B\in \Sp,$ we denote by $\lambda_B^{min}$ its smallest eigenvalue. Then, for $B, B' \in \Sp,$ 
\begin{equation}\label{ineqmineig}
|\lambda_{B}^{min} - \lambda_{B'}^{min}   |  \leq  |  B-B' |. 
\end{equation}
Indeed, by  Courant Fisher theorem, if $x$ is an arbitrary element of $\R^d$ with unit Euclidean norm, 
\begin{align*} 
\lambda_{B}^{min}  &\le \langle Bx, x \rangle =\big\langle (B-B')x, x \big\rangle + \langle B'x, x \rangle \\
&\le |B-B'| +  \langle B'x, x \rangle.
\end{align*}
Minimizing the right-hand side over  $x\in \R^d$ with unit norm,
$$\lambda_{B}^{min}   \le |B-B'| + \lambda_{B'}^{min}.$$
A symmetry argument then gives \eqref{ineqmineig}. It then  follows from \eqref{ineqmineig} that, for a.e. $x \in D,$
\[  \lambda_A \leq   \lambda_ {A(x)}^{min}  \leq  \lambda_{A'(x)}^{min} +  | A(x) - A'(x) | \leq  \lambda_{A'(x)}^{min}  +  \lVert A - A' \rVert_\infty,  \]
and so
\[  \lambda_A \leq \lambda_{A'} +   \lVert A - A' \rVert_\infty .\]
Changing the roles of $A'$ and $A$ we obtain \eqref{EllipticityConst}.

\begin{proof}[Proof of Proposition \ref{StabilityEigenvalues}]
The proof is by induction on $N$.

\textit{Base case $N=1$.} The first eigenvalues of $L_{A}^{-1}$ and $L_{A'}^{-1}$  can be written respectively as
\[   \frac{1}{\lambda_{1}}  = \max_{ \lVert p \rVert_{L^2}=1 }   \langle  L_{A}^{-1}p  ,  p \rangle_{L^2} \]
\[  \frac{1}{\lambda_{1}'}  = \max_{ \lVert p \rVert_{L^2}=1 }   \langle  L_{A'}^{-1}p  ,  p \rangle_{L^2}.  \]
Now, for an arbitrary $p\in \Lavg$ with $\lVert p\rVert_{L^2}=1$, we have
\begin{align}
\begin{split}
 \frac{1}{\lambda_1} \geq \langle  L_A^{-1} p  ,  p \rangle_{L^2}   &= \langle  (L_A^{-1}  - L_{A'}^{-1})p , p  \rangle_{L^2}   + \langle L_{A'}^{-1} p , p \rangle_{L^2} 
 \\ & \geq - \lVert L_A^{-1} - L_{A'}^{-1} \rVert_{op} +   \langle L_{A'}^{-1} p , p \rangle_{L^2} 
 \\& \geq  - \frac{C}{\lambda_A\lambda_{A'}} \lVert A - A' \rVert_{\infty} + \langle L_{A'}^{-1} p , p \rangle_{L^2},
\end{split}
\end{align}
where the last inequality follows from Lemma \ref{lem:stability}. Maximizing over all unit norm $p\in \Lavg$  in the last line of the above expression gives
\[  \frac{1}{\lambda_1'} - \frac{1}{\lambda_1} \leq \frac{C}{\lambda_A\lambda_{A'}} \lVert A - A' \rVert_{\infty} . \]
Reversing the roles of $A$ and $A'$ and combining with the above inequality
\begin{equation*}
  \left \lvert  \frac{1}{\lambda_1'} - \frac{1}{\lambda_1} \right \rvert  \leq  \frac{C}{\lambda_A\lambda_{A'}} \lVert A - A' \rVert_{\infty}. 
\end{equation*}
Therefore, if $\lVert A- A'    \rVert_{\infty}  \leq  \frac{\lambda_A}{2}$ we deduce using \eqref{EllipticityConst} that
\begin{equation}
  \left \lvert  \frac{1}{\lambda_1'} - \frac{1}{\lambda_1} \right \rvert  \leq  C_A \lVert A - A' \rVert_{\infty}, 
  \label{IneqLambdas} 
\end{equation}
for a constant $C_A$ that only depends on $A$.

Let us now focus on proving the statement about eigenfunctions.  Let  $\psi_1'$ be a unit norm eigenfunction of $L^{-1}_{A'}$ with eigenvalue $\frac{1}{\lambda_{1}'}$. Let us denote by 
\[\frac{1}{\tilde{\lambda}_{1}} , \frac{1}{\tilde{\lambda}_{2}}, \dots \] 
the \textit{different} eigenvalues of $L_{A}^{-1}$. Also, denote by $P_i $ the projection onto the eigenspace of $L_{A}^{-1}$ associated to the eigenvalue $\frac{1}{\tilde{\lambda}_i}$. Then, 
\begin{align*}
\begin{split}
  L_{A}^{-1} \psi_1' & = \frac{1}{\lambda_1}   P_1 \psi_1'     + \sum_{i=2}^{\infty} \frac{1}{\tilde{\lambda}_i}   P_i \psi_1',
\end{split}
\end{align*}
so that 
\[    L_{A}^{-1}\psi_1' - \frac{1}{\lambda_1}\psi_1'  = \sum_{i=2}^\infty  \left(  \frac{1}{\tilde{\lambda}_i} - \frac{1}{\lambda_1} \right) P_i \psi_1'.  \] 
Thus, 
\begin{align}
\begin{split}
\Big\lVert  L_{A}^{-1}\psi_1'   - \frac{1}{\lambda_1} \psi_{1}'  \Big\rVert_{L^2}^2 &= \sum_{i=2}^\infty \left( \frac{1}{\tilde{\lambda}_i} - \frac{1}{\lambda_1} \right)^2  \lVert   P_i\psi_1' \rVert_{L^2}^2
\\& \geq \left( \frac{1}{\lambda_1} -\frac{1}{\tilde{\lambda}_2} \right)^2 \sum_{i=2}^\infty \lVert P_i \psi_1'  \rVert^2_{L^2}
\\& = \left( \frac{1}{\lambda_1} -\frac{1}{\tilde{\lambda}_2} \right)^2 \lVert \psi_1' - P_1 \psi_1'  \rVert_{L^2}^2.
\end{split}
\end{align}
Hence, 
\begin{align}
\begin{split}
\lVert  \psi_1' - P_1 \psi_1'    \rVert_{L^2}  & \leq   \frac{\lambda_1 \tilde{\lambda}_2}{\tilde{\lambda}_2 - \lambda_1   }   \Big\lVert  L_{A}^{-1}\psi_1'   - \frac{1}{\lambda_1} \psi_1'  \Big\rVert_{L^2} 
\\& \leq  C_A \left(  \lVert  L_{A}^{-1}\psi_1'   - L_{A'}^{-1} \psi_1'  \rVert_{L^2} + \Big\lVert L_{A'}^{-1} \psi_1' - \frac{1}{\lambda_1}\psi_1'  \Big\rVert_{L^2}       \right) 
\\& \leq  C_A \left(  \lVert  L_{A}^{-1}  - L_{A'}^{-1} \rVert_{op} +   \Big\lvert    \frac{1}{\lambda_1} - \frac{1}{\lambda_1'}    \Big\rvert    \right)
\\& \leq C_A \lVert  A- A'  \rVert_{\infty},
\label{AuxAppendix1}
\end{split}
\end{align}
where the last inequality follows from Lemma \ref{lem:stability} and from \eqref{IneqLambdas} (assuming that $\lVert A' - A \rVert_\infty \leq \frac{\lambda_A}{2}$).  Choosing $\delta_A :=  \min \{ \frac{1}{2C_A},  \frac{\lambda_A}{2}  \}$, where $C_A$ is the constant in the last line of \eqref{AuxAppendix1}, we notice that if $\lVert A - A' \rVert_\infty < \delta_A$, then
\[  \psi_1 := \frac{1}{\lVert P_1 \psi_1'\rVert}  P_1 \psi_1'  \]
is a normalized eigenfunction of $L_{A}^{-1}$ with eigenvalue $\frac{1}{\lambda_1}.$ Moreover,
\[ \lVert \psi_1 - \psi_1' \rVert_{L^2} \leq C_A \lVert  A  - A' \rVert_\infty   \]
for a constant $C_A$ (not necessarily equal to that in \eqref{AuxAppendix1}).

\textit{Inductive step.} Let us now suppose that the result is true for $N-1$, so that there are constants $C_A$ and $\delta_A>0$ such that if $\lVert A - A' \rVert_\infty   \leq \delta_A   $ then 
\[  \Big\lvert   \frac{1}{\lambda_i}   - \frac{1}{\lambda_i'} \Big\rvert  \leq  C_A \lVert  A - A' \rVert_{\infty} ,\quad i=1, \dots, N-1,\]
and
\[ \lVert  \psi_i - \psi_{i}' \rVert_{L^2}  \leq  C_A \lVert  A - A' \rVert_{\infty} ,\quad i=1, \dots, N-1,    \]
for some orthonormal eigenfunctions $\psi_1, \dots, \psi_{N-1}$ of $L_{A}^{-1}$ with eigenvalues $\frac{1}{\lambda_1}, \dots, \frac{1}{\lambda_{N-1}}$, and some orthonormal eigenfunctions $\psi_1', \dots, \psi_{N-1}'$ of $L_{A'}^{-1}$ with eigenvalues $\frac{1}{\lambda_1'}, \dots, \frac{1}{\lambda_{N-1}'}$. 

Let us start with the statement about the eigenvalues. Indeed, $\frac{1}{\lambda_N}$ and $\frac{1}{\lambda_{N}'}$  can be written respectively as
\[  \frac{1}{\lambda_N}  =  \max_{p \in \Sigma^{\perp}, \lVert p \rVert= 1 }   \langle L_{A}^{-1} p, p \rangle_{L^2}   \]
\[  \frac{1}{\lambda_N'}  =  \max_{p \in \Sigma^{'\perp}, \lVert p \rVert= 1 }   \langle L_{A'}^{-1} p, p \rangle_{L^2},   \]
where $ \Sigma :=  \Span \{  \psi_1, \dots, \psi_{N-1} \}$ and $\Sigma' = \Span \{  \psi_1' , \dots, \psi_{N-1}'   \} $, are as above.  For an arbitrary $p' \in \Sigma^{'\perp} $ with $\lVert  p' \rVert_{L^2}=1$, let us consider 
\[  p := p' - \sum_{i=1}^{N-1}\langle  p' , \psi_i     \rangle_{L^2} \psi_i. \]  
 From the induction hypothesis
 \begin{align}
 \begin{split}
\left \lVert   \sum_{i=1}^{N-1}  \frac{1}{\lambda_i}\langle p' , \psi_i \rangle_{L^2} \psi_i  \right\rVert_{L^2}  &=  \left  \lVert   \sum_{i=1}^{N-1}  \frac{1}{\lambda_i}\langle p' , \psi_i \rangle_{L^2} \psi_i  -  \sum_{i=1}^{N-1}  \frac{1}{\lambda_i'}\langle p' , \psi_i' \rangle_{L^2} \psi_i'     \right\rVert_{L^2}  \\ &\leq C_{N,A} \lVert A - A'\rVert_\infty,   
 \label{AuxAppen2}
 \end{split}
 \end{align}
provided $\lVert A' - A \rVert_\infty \leq \delta_A $.  So if  $\lVert A' - A \rVert_\infty \leq \delta_A $, then 
\begin{align}
\begin{split}
\langle L_{A}^{-1} p, p \rangle_{L^2}  &=  \langle L_{A}^{-1}p',  p' \rangle_{L^2}   - \sum_{i=1}^{N-1} \frac{1}{\lambda_i}\langle p', \psi_i \rangle_{L^2}^2
\\ & \geq \langle L_{A}^{-1} p',  p' \rangle_{L^2} - C_{N,A}\lVert  A-A' \rVert_\infty,
\end{split}
\label{AuxAppen3}
\end{align}
where the last inequality follows from \eqref{AuxAppen2}. On the other hand, assuming $ \lVert A - A'\rVert_\infty \leq \frac{1}{2 C_{N,A}}$ in \eqref{AuxAppen2}, we deduce that
\[    \frac{\langle L_A^{-1}p ,p  \rangle_{L^2}}{\langle p, p \rangle_{L^2}} ( 1  + C_{N,A} \lVert A -A' \rVert_\infty )  \geq       \langle L_A^{-1}p ,  p \rangle_{L^2},  \]
and so combining with \eqref{AuxAppen3} and the fact that $\lVert L_{A'}^{-1}p' - L_{A}^{-1}p' \rVert_{L^2} \leq C_{A} \lVert A - A' \rVert _\infty $ we conclude that
\[  \frac{1}{\lambda_N}  + C_{N,A} \lVert A - A' \rVert_\infty  \geq  \langle L_{A'}^{-1}p' ,  p' \rangle_{L^2}.  \]
Taking the maximum over unit norm $p'$
\[ \frac{1}{\lambda_N}  +  C_{N,A} \lVert A - A' \rVert_\infty  \geq \frac{1}{\lambda_N'}.  \]
We may now switch the roles of $A$ and $A'$ and obtain a similar inequality which then implies that
\begin{equation}
 \left \lvert  \frac{1}{\lambda_N} - \frac{1}{\lambda_N'} \right   \rvert \leq C_{N,A} \lVert A - A' \rVert_\infty . 
 \label{AuxAppen4}
\end{equation}

Let us now establish the statement about the eigenfunctions. Let $\psi_N'$ be a unit norm eigenfunction of $L_{A'}^{-1}$ with eigenvalue $\frac{1}{\lambda_N'}$.  Let us denote by 
\[ \frac{1}{\tilde{\lambda}_{k_{N} }  }  ,   \frac{1}{\tilde{\lambda}_{k_{N} + 1}  }, \dots    \]
the \textit{different} eigenvalues of $L_{A}^{-1}$ greater than or equal to $\frac{1}{\lambda_N}$. Also, for $i\geq k_N $ we let $E_{i}$ be the eigenspace of $L_{A}^{-1}$ associated to the eigenvalue $\frac{1}{\tilde{\lambda}_i}$, and denote by $P_i$ the projection onto $E_i$. Finally, we let $P_{-}$ be the projection onto $E_{k_N} \setminus \Span \{\psi_1, \dots, \psi_{N-1} \}$. Then, 
\[  L_{A}^{-1}\psi_N'  := \sum_{i=1}^{N-1} \frac{1}{\lambda_i}  \langle  \psi_N' ,  \psi_i \rangle_{L^2}   \psi_i   + \frac{1}{\lambda_N} P_-\psi_N'  + \sum_{i=k_N + 1} ^\infty \frac{1}{\tilde{\lambda}_i} P_i \psi_N',    \]
and in particular
\[  L_{A}^{-1}\psi_N'  -  \frac{1}{\lambda_N} \psi_{N}'  =  \sum_{i=1}^{N-1} \left( \frac{1}{\lambda_i}- \frac{1}{\lambda_N}\right)  \langle  \psi_N' ,  \psi_i \rangle_{L^2}   \psi_i     + \sum_{i=k_N + 1} ^\infty \left( \frac{1}{\tilde{\lambda}_i}- \frac{1}{\lambda_N} \right) P_i \psi_N'. \]
Using the induction hypothesis and the fact that $\psi_N'$ is orthogonal to all $\psi_1', \dots, \psi_{N-1}'$, we deduce that
\begin{align}
\begin{split}
\Big\lVert L_{A}^{-1}\psi_N'  -  \frac{1}{\lambda_N} \psi_{N}'  \Big\rVert_{L^2}^2 \geq - C_{N,A} \lVert A - A' \rVert_{\infty}^2 + \left( \frac{1}{\tilde{\lambda}_{k_N + 1}}- \frac{1}{\lambda_N} \right)^2 \sum_{i=k_N + 1}^\infty \lVert P_i \psi_N'  \rVert_{L^2}^2.
\end{split}
\end{align}
Combining with \eqref{AuxAppen4}, and arguing as in the base case $N=1$,  it  follows that 
\[   \Big\lVert  \psi_{N}' - P_{-} \psi_{N}'  - \sum_{i=1}^{N-1} \langle \psi_N' , \psi_i  \rangle_{L^2} \psi_i \Big\rVert_{L^2}=\Bigl(\sum_{i=k_N + 1}^\infty \lVert P_i \psi_N'  \rVert_{L^2}^2 \Bigr)^{1/2} \leq  C_{A,N} \lVert A- A' \rVert_{\infty}.\]
Using one more time the induction hypothesis and the fact that $\psi_N'$ is orthogonal to all $\psi_1', \dots, \psi_{N-1}'$,
we deduce that
\[  \lVert  \psi_{N}' - P_{-} \psi_{N}' \rVert_{L^2} \leq  C_{A,N} \lVert A- A' \rVert_{\infty}.\]
As in the base case, we see that provided that $\lVert A - A' \rVert_\infty $ is small enough, then
\[ \lVert \psi_N'  - \psi_{N} \rVert_{L^2}  \leq C_{A,N} \lVert A - A' \rVert_\infty, \]
where $\psi_N := \frac{1}{\lVert P_{-} \psi_{N}' \rVert_{L^2}} P_{-} \psi_{N}' $.

\end{proof}

  \bibliographystyle{plain}
\bibliography{isbib}

\end{document}